\documentclass[11pt,a4paper]{scrartcl}

\PassOptionsToPackage{shortlabels}{enumitem}
\PassOptionsToPackage{mathfont=stix2}{mystyle-koma-enhanced}
\usepackage{mystyle-koma-enhanced}
\usepackage{tikz}
\usepackage{wrapfig}

\MSC{60K35, 82B26, 82B31}
\Keywords{Galton--Watson tree, infinite mean, random interchange, random loop model, phase transition}

\newcommand{\E}{\mathbb E}
\renewcommand{\P}{\mathbb P}

\newcommand{\1}{\mathbf 1}

\newcommand{\revision}[1]{#1}
\declaretheorem[style=mystyleplain,name=Theorem]{maintheorem}

\usetikzlibrary{arrows.meta,bending,decorations.markings}
\definecolor{coralc}{RGB}{216,90,48}
\definecolor{tealc}{RGB}{15,110,86}
\definecolor{purplec}{RGB}{83,74,183}
\definecolor{orangec}{RGB}{186,117,23}

\title{Infinite-mean Galton--Watson trees have zero random-interchange threshold}

\author{
Andreas Klippel\thanks{Johannes Gutenberg University Mainz, Germany.
  \href{mailto:anklippe@uni-mainz.de}{anklippe@uni-mainz.de}}
\and
Benjamin Lees \orcidlink{0000-0003-2657-5358}\thanks{School of Mathematics,
  University of Leeds, Leeds, LS2 9JT, UK.
  \href{mailto:b.t.lees@leeds.ac.uk}{b.t.lees@leeds.ac.uk}}
\and
Christian M\"onch \orcidlink{0000-0002-6531-6482}\thanks{Independent Researcher,
  64289 Darmstadt, Germany.
  \href{mailto:cmoench25@gmail.com}{cmoench25@gmail.com}}
}

\date{6 October 2026}

\begin{document}
\maketitle

\begin{abstract}
We prove that, on a Galton--Watson tree with almost surely finite offspring and
infinite offspring mean, the random interchange process has critical parameter
zero for infinite cycles, conditionally on survival. Combined with the finite-mean strict inequality
established in \href{https://arxiv.org/abs/2503.03319}{arXiv:2503.03319},
this gives a dichotomy for locally finite Galton--Watson trees with offspring mean in
\((1,\infty]\): the loop and link critical parameters coincide exactly in the
infinite-mean case. The proof reduces the tree problem to a finite permutation
estimate derived from peeling estimates for random surfaces.
\end{abstract}

\section{Introduction and main results}
\label{sec:intro}
\label{sec:model}
\label{sec:gwzero}

The random interchange process, introduced by Harris under the name
\emph{random stirring model} \cite{harris_markov_lattices_1972}, is a spatial
random permutation generated by Poisson transpositions along the edges of a
graph $G=(V,E)$. Its cycles can equivalently be traced as loops in \(V\times[0,1]\).

The geometry of these cycles has been studied on several families of graphs.
For the complete graph, Schramm related macroscopic cycles to the giant
component of the underlying random graph and identified their
Poisson--Dirichlet scaling limit \cite{schramm_compositions_2005}. Infinite
cycles on \(\mathbb Z^d\) were recently established in dimensions \(d\ge5\)
for sufficiently large \(\beta\) by Elboim and Sly
\cite{elboim2024infinitecyclesinterchangeprocess}. On regular trees, the phase
transition has been developed through work of Angel and Hammond, among others
\cite{angel_permutations,hammond_infinitecycles,betz_sharptrees}. In the Galton--Watson setting, \revision{Betz, Ehlert, and the second author} established sufficient conditions for a phase transition in a broader family of random loop models that includes the random interchange process \cite{BetzEhlLees18}.

The process is also connected to graphical representations of the quantum
Heisenberg ferromagnet \cite{powers_heisenberg,toth_pressure}. Related loop
representations arise for other quantum spin systems, including the
antiferromagnetic Heisenberg model and more general random loop frameworks
\cite{aizenman_nachtergaele,ueltschi_random_loops}. Our subject throughout is
the random interchange process itself.

Every infinite loop of the interchange process lies in an infinite component of the subgraph
formed by edges carrying at least one Poisson point. The loop critical
parameter is therefore at least the corresponding percolation critical
parameter. Their comparison depends on the geometry of the underlying graph:
Schramm's complete-graph result gives asymptotic equality, whereas strict
separation holds on infinite bounded-degree graphs with finite link critical parameter
\cite{muehlbacher_critical_2021,betzklippelkraft2026}.

In our recent paper \cite{klippel_lees_monch_loop_vs_bernoulli_trees}, we
proved strict inequality of the loop and link thresholds for Galton--Watson
trees with offspring mean in \((1,\infty)\), conditioned on survival. The
present paper treats the complementary infinite-mean case. Its link threshold
is zero, and our main result shows that the loop threshold is zero as well.
Thus, conditionally on survival, the loop and link thresholds on locally
finite Galton--Watson trees with offspring mean in \((1,\infty]\) are strictly
separated when the mean is finite, whereas in the infinite-mean case they
coincide and are both zero.

\paragraph{Model and critical parameters.}

Let \(T\) be a locally finite tree rooted at \(o\). In the random interchange process
with parameter \(\beta>0\), each edge \(e\) carries an independent Poisson
point process of intensity \(\beta\) on \([0,1]\). A point of this process is
called a cross or a link. We work on the almost-sure event that each vertex's copy of $[0,1]$, which we think of as a
circle, has finitely many incident cross endpoints and no two of them have the
same time.

Remove the cross endpoints from each circle \(\{x\}\times[0,1]\) and direct the
remaining cyclic intervals in increasing time. If an interval ends at a cross
on \(\{x,y\}\) at time \(t\), its successor is the interval on the \(y\)-circle
beginning immediately after \(t\); a circle with no incident endpoint gives
one interval whose successor is itself. This defines a bijection on the
countable set of directed intervals. A loop is an orbit of this bijection,
with the corresponding crosses inserted between successive intervals. For
finite graphs this is the usual trajectory construction; see, for instance,
\cite{ueltschi_random_loops}. On a locally finite infinite tree, this
formulation defines the orbits directly on the interval state space. The loop
from a point in the interior of an interval follows the orbit of that interval.
A cross endpoint is visited when the interval ending there belongs to the
orbit; a loop started at the endpoint begins with the interval reached
immediately after crossing it. On finite subconfigurations this agrees with the
usual trajectory rule.
\revision{Figure~\ref{fig:successor-bijection} illustrates the construction
for a graph consisting of one edge.}

\begin{figure}[htbp]
\centering
\resizebox{\textwidth}{!}{%
\begin{tikzpicture}[
  x=1cm,y=1cm,>=Stealth,
  every node/.style={font=\small},
  strand/.style={line width=1.15pt},
  directed/.style={strand,postaction={decorate},
    decoration={markings,mark=at position 0.52 with {\arrow{Stealth}}}},
  seam/.style={line width=0.7pt,densely dashed,->},
  state/.style={inner sep=4pt,font=\normalsize},
  successor/.style={->,line width=0.9pt}
]
  % Panel headings.
  \node[font=\bfseries] at (1.65,5.6) {(a) Graphical loops};
  \node[font=\bfseries] at (7.2,5.6) {(b) Cut each circle};
  \node[font=\bfseries] at (13.1,5.6) {(c) Successor cycles};

  % (a) One edge, represented by two vertical time circles.
  % The slanted cross strands are a schematic separation of t^- and t^+.
  \draw[->,gray!65] (-0.45,0.4) -- (-0.45,5.03) node[above] {$t$};
  \foreach \yy/\lab in {0.4/0,1.5/{t_1},3.7/{t_2},4.8/1} {
    \draw[gray!65] (-0.51,\yy) -- (-0.39,\yy);
    \node[left] at (-0.54,\yy) {$\lab$};
  }
  \draw[gray!45,line width=0.6pt] (0.8,0.4) -- (2.5,0.4);
  \draw[gray!45,line width=0.6pt] (0.8,4.8) -- (2.5,4.8);

  % At x: B_x (coral), A_x (teal), B_x (coral).
  \draw[directed,coralc] (0.8,0.4) -- (0.8,1.34);
  \draw[directed,tealc] (0.8,1.66) -- (0.8,3.54);
  \draw[directed,coralc] (0.8,3.86) -- (0.8,4.8);
  % At y: B_y (teal), A_y (coral), B_y (teal).
  \draw[directed,tealc] (2.5,0.4) -- (2.5,1.34);
  \draw[directed,coralc] (2.5,1.66) -- (2.5,3.54);
  \draw[directed,tealc] (2.5,3.86) -- (2.5,4.8);

  % The two simultaneous switches at each cross. The white underlay makes
  % clear that the crossing strands do not form a junction.
  \draw[strand,coralc] (0.8,1.34) -- (2.5,1.66);
  \draw[strand,tealc,preaction={draw=white,line width=3.3pt}]
    (2.5,1.34) -- (0.8,1.66);
  \draw[strand,coralc] (2.5,3.54) -- (0.8,3.86);
  \draw[strand,tealc,preaction={draw=white,line width=3.3pt}]
    (0.8,3.54) -- (2.5,3.86);

  % Dashed wrap connections identify time 1 with time 0 at each vertex.
  \draw[seam,coralc,rounded corners=5pt]
    (0.8,4.8) -- (0.22,4.8) -- (0.22,0.4) -- (0.8,0.4);
  \draw[seam,tealc,rounded corners=5pt]
    (2.5,4.8) -- (3.08,4.8) -- (3.08,0.4) -- (2.5,0.4);
  \foreach \xx in {0.8,2.5} {
    \fill[gray!55] (\xx,0.4) circle[radius=1.5pt];
    \fill[gray!55] (\xx,4.8) circle[radius=1.5pt];
  }
  \node[below] at (0.8,0.28) {$x$};
  \node[below] at (2.5,0.28) {$y$};
  \foreach \yy in {0.91,4.3} {
    \node[right,coralc] at (0.85,\yy) {$B_x$};
    \node[left,tealc] at (2.45,\yy) {$B_y$};
  }
  \node[right,tealc] at (0.85,2.6) {$A_x$};
  \node[left,coralc] at (2.45,2.6) {$A_y$};

  \draw[->,gray!75,line width=0.65pt] (3.45,2.8) -- (4.05,2.8);
  \node[above,font=\scriptsize] at (3.75,2.86) {cut};

  % (b) The two open cyclic intervals on each circle.
  % Increasing time is clockwise; t_1 and t_2 are at the right and left.
  % A_v is the lower arc, B_v the upper arc through the time seam.
  \foreach \xx/\vertex/\acol/\bcol in {
    5.65/x/tealc/coralc,8.75/y/coralc/tealc} {
    \begin{scope}[shift={(\xx,2.8)}]
      \draw[directed,\acol] (-7:0.88)
        arc[start angle=-7,end angle=-173,radius=0.88];
      \draw[directed,\bcol] (173:0.88)
        arc[start angle=173,end angle=7,radius=0.88];
      \foreach \ang in {-7,-173}
        \draw[\acol,fill=white,line width=0.7pt]
          (\ang:0.88) circle[radius=1.6pt];
      \foreach \ang in {7,173}
        \draw[\bcol,fill=white,line width=0.7pt]
          (\ang:0.88) circle[radius=1.6pt];
      \node at (0,0) {$\vertex$};
      \node[\acol] at (0,-1.16) {$A_{\vertex}$};
      \node[\bcol] at (0,1.19) {$B_{\vertex}$};
      \node[font=\scriptsize,gray!80] at (0,0.49) {$0\sim1$};
      \node[right,font=\scriptsize] at (1.0,0) {$t_1$};
      \node[left,font=\scriptsize] at (-1.0,0) {$t_2$};
    \end{scope}
  }
  \node[align=center] at (7.2,0.48) {
    $A_v=(t_1,t_2)_v$\quad and\quad $B_v=(t_2,t_1)_v$\\[3pt]
    $B_v$ passes through $1\sim0$;\quad $v\in\{x,y\}$.
  };

  \draw[->,gray!75,line width=0.65pt] (10.3,2.8) -- (10.9,2.8);
  \node[above,font=\scriptsize] at (10.6,2.86) {$S$};

  % (c) Successor images verified from the terminal cross of each arc.
  % A_x --t_2--> B_y --t_1--> A_x and A_y --t_2--> B_x --t_1--> A_y.
  \node[state,tealc] (Ax) at (11.8,4.05) {$A_x$};
  \node[state,tealc] (By) at (14.4,4.05) {$B_y$};
  \draw[successor,tealc] (Ax.north east)
    to[bend left=35] node[above,font=\scriptsize] {$t_2$} (By.north west);
  \draw[successor,tealc] (By.south west)
    to[bend left=35] node[below,font=\scriptsize] {$t_1$} (Ax.south east);

  \node[state,coralc] (Ay) at (11.8,1.8) {$A_y$};
  \node[state,coralc] (Bx) at (14.4,1.8) {$B_x$};
  \draw[successor,coralc] (Ay.north east)
    to[bend left=35] node[above,font=\scriptsize] {$t_2$} (Bx.north west);
  \draw[successor,coralc] (Bx.south west)
    to[bend left=35] node[below,font=\scriptsize] {$t_1$} (Ay.south east);
  \node at (13.1,0.35) {$S=(A_x\ B_y)(A_y\ B_x)$};

  \node[align=center,text width=16.1cm] at (7.05,-0.95) {
    The four arcs are the states of $S$; each colour is one orbit, hence one loop.\\[4pt]
    Dashed connections identify time $1$ with time $0$; gaps make the cuts visible.\\[4pt]
    Slanted connections depict instantaneous crosses.
  };
\end{tikzpicture}
}
\caption{\revision{Two crosses on a single edge \(\{x,y\}\), at times
\(0<t_1<t_2<1\). Cutting the vertex circles produces four directed
intervals, whose two successor cycles are the two loops.}}
\label{fig:successor-bijection}
\end{figure}

Write \(L_x\) for the loop containing \((x,0)\). An infinite loop is a loop
whose orbit has infinite vertex support; write \(|L|\) for the cardinality of
the vertex support of a loop \(L\). The loop threshold is
\[
        \beta_c^{\mathrm{loop}}(T)
        =
        \inf\{\beta\ge0:\P_{T,\beta}(|L_o|=\infty)>0\}.
\]
The associated link percolation retains an edge if it carries at least one
cross, so the retention probability is \(1-e^{-\beta}\). Its threshold is
\[
        \beta_c^{\mathrm{link}}(T)
        =
        \inf\{\beta\ge0:\P_{T,\beta}(o\leftrightarrow\infty)>0\}.
\]
\revision{We use the convention \(\inf\varnothing=\infty\).}
Both thresholds are independent of the chosen root when \(T\) is connected.

The Galton--Watson tree in the main theorem has offspring variable \(Z\),
generating function \(f(s)=\E[s^Z]\), and unconditioned law
\(\mathbf P_Z\). We assume throughout that \(Z<\infty\) almost surely and
\(\E Z=\infty\), and condition the tree on survival. For a fixed realisation
\(T\), the probability \(\P_{T,\beta}\) averages only over the Poisson crosses.
We write \(\E_{T,\beta}\) for the corresponding expectation.
Thus \(\beta_c^{\mathrm{loop}}(T)\) and
\(\beta_c^{\mathrm{link}}(T)\) are quenched critical parameters, depending on
the realised tree. At fixed \(\beta\), write
\[
        \P_\beta^{\mathrm{ann}}(dT,d\omega)
        =
        \mathbf P_Z(dT)\P_{T,\beta}(d\omega)
\]
for the unconditioned annealed law and \(\E_\beta^{\mathrm{ann}}\) for its
expectation. We use this law for intermediate estimates. The conclusion of
Theorem~\ref{thm:gwzero} is quenched: for
\(\mathbf P_Z(\,\cdot\mid T\text{ survives})\)-almost every realisation
\(T\), both quenched critical parameters vanish. The final step of the proof
passes from positive annealed probability to this conditioned almost-sure
statement.

The proof uses clean two-cross children and a finite permutation estimate to
show that the multi-link loop visits linearly many of them in expectation. The
finite estimate is derived from a peeling estimate of
Budzinski, Curien, and Petri
\cite{budzinski_curien_petri_2019_universality}. Section~\ref{sec:proofs}
proceeds through the multi-link reduction, contraction to tagged endpoints,
Galton--Watson implication, and boundary-pairing argument.

\begin{maintheorem}[Absence of phase transition on infinite-mean Galton--Watson trees]
\label{thm:gwzero}
Let \(T\) be a Galton--Watson tree with offspring law \(Z\), conditioned on
survival, and assume that \(Z<\infty\) almost surely. If
\(\mathbb E Z=\infty\), then for the random interchange process
\[
        \beta_c^{\mathrm{loop}}(T)
        =
        \beta_c^{\mathrm{link}}(T)
        =
        0
\]
almost surely.
\revision{More precisely, there exists \(\beta_0>0\), depending only
on the offspring law, such that for every fixed
\(\beta\in(0,\beta_0)\),
\[
        \P_{T,\beta}(|L_o|=\infty)>0
\]
for
\(\mathbf P_Z(\,\cdot\mid T\text{ survives})\)-almost every realisation
\(T\).}
\end{maintheorem}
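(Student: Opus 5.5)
Since the loop critical parameter is at least the link critical parameter, it suffices to prove the quantitative statement: there is \(\beta_0>0\) such that for fixed \(\beta\in(0,\beta_0)\), \(\P_{T,\beta}(|L_o|=\infty)>0\) for almost every surviving \(T\). Then \(\beta_c^{\mathrm{loop}}(T)=0\), and since \(\beta_c^{\mathrm{link}}\le\beta_c^{\mathrm{loop}}\) both vanish. The link statement alone is standard: since \(\E Z=\infty\), Bernoulli percolation with retention \(1-e^{-\beta}\) is a Galton--Watson process with mean \((1-e^{-\beta})\E Z=\infty\), hence supercritical. The loop statement needs a genuine construction, and I would organise it as an exploration of the loop \(L_o\) through the tree, generation by generation, comparing the set of generation-\(n\) vertices "reached" by the loop with a supercritical branching process.

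\emph{Step 1 (clean two-cross children).} At a vertex \(v\) with \(k\) children, call a child \(w\) clean if the edge \(vw\) carries exactly two crosses. The probability of this is \(\tfrac{\beta^2}{2}e^{-\beta}\asymp\beta^2\), so the number of clean children is binomial with mean \(\asymp\beta^2 k\), which is typically large once \(k\gg\beta^{-2}\). Since \(\E Z=\infty\), the event \(\{Z\ge K\}\) has positive probability for every \(K\). Moreover, a clean child whose own subtree is not yet explored acts like a "handle": the loop entering at one cross must return through the other cross after exploring the subtree below \(w\).

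\emph{Step 2 (multi-link reduction and contraction).} I would contract each clean child, together with what the loop does in its subtree, into a pair of tagged endpoints on the circle of \(v\). The circle of \(v\) then carries \(m\) pairs of crosses at independent uniform times, along with the other links. The loop structure at \(v\) becomes a uniform random pairing, i.e.\ a permutation of the \(2m\) endpoints. The key finite estimate is that the loop through a given endpoint visits linearly many, at least \(c m\), of the tagged pairs in expectation. This is exactly the form of a peeling estimate for random surfaces glued from polygons: following a loop through random pairings is a peeling exploration. I would invoke the Budzinski--Curien--Petri estimate to bound from below the expected number of faces, i.e.\ pairs, discovered before the exploration closes. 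This finite permutation estimate is the main obstacle. The difficulty is converting their genus/face bounds into a lower bound of order \(m\) for a single orbit, uniformly in \(m\).

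\emph{Step 3 (Galton--Watson implication).} Each visited clean child \(w\) is a fresh root of an independent Galton--Watson subtree, entered by the loop with a marked crossing. This gives an annealed branching structure: the number of "offspring" of a reached vertex is \(\ge c\cdot\#\{\text{clean children}\}\) in expectation. Its mean is infinite, since \(\E[\beta^2 Z]=\infty\), so truncating \(Z\) at a large level gives mean \(>1\) for a suitably small \(\beta\) and a large truncation. The truncation level is fixed by the law, which yields \(\beta_0\). The subtlety is that the loop's visits to distinct children are dependent and it must not close prematurely. I would handle this by a boundary-pairing argument: loops below different clean children are independent given their entry points, and closure at \(v\) is controlled by Step 2. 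Supercriticality then gives \(\P^{\mathrm{ann}}_\beta(|L_o|=\infty)>0\).

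\emph{Step 4 (quenched upgrade).} The event \(\{\P_{T,\beta}(|L_x|=\infty)>0 \text{ for some vertex }x\}\) is inherited by ancestors, since it only requires a finite path of crossings, which has positive conditional probability. It is therefore an inherited property of the Galton--Watson tree. By the \(0\)--\(1\) law for inherited properties, it holds almost surely on survival, because it has positive annealed probability. Finally, root independence gives \(\P_{T,\beta}(|L_o|=\infty)>0\).
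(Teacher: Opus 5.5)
You have the right architecture, but there is a genuine gap in Steps~1--3 and a missing hypothesis in Step~2. Step~4 is sound. The complement of your event holds for every finite tree and passes to child subtrees, so the 0--1 law for inherited properties applies to it. Together with root independence, this replaces the paper's fixed-point argument via Lemma~\ref{lem:aqbranch}.

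\textbf{Independence in Steps~1--3.} These steps assume an independence structure your decomposition does not have. Your ``handle'' claim fails for a child \(w\) defined only by two crosses on \(\{v,w\}\). Suppose \(w\) has a child \(u\) joined by two crosses at \(s_1,s_2\), nothing below \(u\), and cyclic order \(t_1<s_1<t_2<s_2\). The orbit entering \(w\) after \(t_1\) goes to \(s_1\), around \(u\) to \(s_2\), and returns to \(v\) through \(t_1\). So each clean child's return map on \(\{t_1,t_2\}\) is the identity or the swap, depending on its subtree and on where \(t_1,t_2\) fall. The structure at \(v\) is therefore not a uniform fixed-point-free involution independent of the rest.

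More seriously, which clean children the loop enters at \(v\), and through which cross, is a function of the siblings' subtrees. Conditioning on the loop entering \(w\) therefore biases the law of \(w\)'s subtree, so your offspring counts are neither independent nor identically distributed. ``Independent given their entry points'' does not help, because the entry points are produced by exactly this dependence.

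You title Step~2 a multi-link reduction, but its mechanism never appears. The paper does three things you do not:
\begin{enumerate}
\item It follows the loop \(\gamma_x\) restricted to edges carrying at least two crosses.
\item It tags only clean double \emph{leaves}: exactly two crosses on \(\{x,y\}\) and no multi-link edge below \(y\). Their return map is deterministically the swap, so \(\tau\) is exactly uniform given \(\mathcal G_x\) while \(\chi\) is \(\mathcal G_x\)-measurable (Lemma~\ref{lem:endpt}).
\item It branches not at clean children but at \emph{uni-link} exits below the visited leaves.
\end{enumerate}
A uni-link returns the loop to the same point, so nothing beneath it affects the multi-link loop above, and each exit opens a fresh independent copy of the tree. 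This is what makes Lemma~\ref{lem:multi} a genuine Galton--Watson comparison, with offspring mean at least \(b_\beta\,\E_\beta^{\mathrm{ann}}\sum_y\ell_y(\gamma_o)\). It is also why the quantity to control is the \emph{time} the loop spends on tagged leaves, not just the number of pairs visited. Lemma~\ref{lem:mass} controls this time via tagged-only Dirichlet gaps and balanced leaves.

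\textbf{The finite estimate.} As stated, your Step~2 estimate cannot hold without a hypothesis on the non-tagged links. They induce the first-return permutation \(\chi\) on tagged endpoints, and if \(\chi\) has many \(1\)- or \(2\)-cycles the orbit closes early. Proposition~\ref{prop:small} requires at most \(\delta_0K\) points in such cycles. The paper obtains this from \(\#\operatorname{cycles}(\chi)\le|B_x|+1\) on the event \(G_x(\eta)\) that tagged endpoints form a \(1-\eta\) fraction of all multi-link endpoints. For large \(Z_o\), the law of large numbers makes \(G_x(\eta)\) typical once \(t_\beta/\mu_\beta>1-\eta/2\).

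That condition, not a truncation level of \(Z\), is the source of \(\beta_0\). At any fixed \(\beta>0\) your offspring mean is already infinite, and decreasing \(\beta\) only makes clean children rarer.

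The single-orbit linear bound, which you flag as the main obstacle, is proved in Subsection~\ref{subsec:finglu} without genus or vertex-count bounds. The paper runs Algorithm~R from the red corner \((j,\chi(j))\) and identifies the inspected sides with the \((\tau\chi)^{-1}\)-orbit of \(j\) (Lemma~\ref{lem:direct-orbit}). It couples strong closures to a Bernoulli clock with hazard \(1/(N-2i-1)\). It bounds short-boundary mass and loop closures using the Budzinski--Curien--Petri domination estimates.
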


%\clearpage
\section{Proofs}
\label{sec:proofs}

\subsection{Multi-link exploration and finite input}
\label{subsec:multi}

Galton--Watson self-similarity was already used \revision{in work of the second author with Betz and Ehlert} to obtain recursive loop-survival estimates \cite[Section~3.1]{BetzEhlLees18}.
More specifically, our decomposition follows the multi-link-cluster exploration of \cite[Section~3]{betz_sharptrees}:
loops are explored inside components of edges carrying at least two links, while single-link boundary edges provide renewal points. We adapt this
\(d\)-ary-tree scheme to the unconditioned annealed Galton--Watson setting. For a vertex \(x\), let \(M_x\) be the connected component of \(x\)
\revision{in the full tree \(T\) after deleting every edge that carries fewer than two links. The \emph{multi-link loop} \(\gamma_x^s\)}
is the loop containing \((x,s)\) in the link configuration restricted to \(M_x\), and we write \(\gamma_x=\gamma_x^0\). A \emph{uni-link}
is a link on an edge that carries exactly one link. The support of a loop is the set of vertices whose circle intervals occur in its
directed orbit.

An \emph{outward one-link exit} of a multi-link loop is a child edge
\(\{v,w\}\), oriented away from the root, which carries exactly one cross at
time \(t\), with \((v,t)\) lying on the multi-link loop. If the loop has just
entered the present multi-link component through a uni-link from its parent,
that entering uni-link is not counted as an outward exit.

The following branching criterion is the one-sided annealed Galton--Watson
counterpart of the exploration criterion in \cite[Proposition~3.2 and Lemma~3.3]{betz_sharptrees}: the expected number of outward one-link exits controls loop survival.

\begin{lemma}[Annealed multi-link branching criterion]
\label{lem:multi}
\revision{Let \(T\) be an unconditioned Galton--Watson tree with offspring law \(Z\),
where \(Z<\infty\) almost surely, and run the random interchange process at a
fixed parameter \(\beta>0\). Let}
\(C^{(1)}(o)\) be the number of outward one-link exits of the multi-link loop
\(\gamma_o\). If
\[
        \E_\beta^{\mathrm{ann}}[C^{(1)}(o)]>1,
\]
then the full loop containing \((o,0)\) has infinite vertex support with
positive annealed probability:
\[
        \P_\beta^{\mathrm{ann}}(|L_o|=\infty)>0.
\]
\end{lemma}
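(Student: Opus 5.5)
The plan is to build a Galton--Watson-type branching process of \emph{renewal points} out of the outward one-link exits, and show it is supercritical with mean offspring \(\E_\beta^{\mathrm{ann}}[C^{(1)}(o)]\). Its survival then forces \(|L_o|=\infty\).

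\textbf{Step 1: reduction to the multi-link loop.} Suppose an outward one-link exit \(\{v,w\}\) carries its single cross at time \(t\), and \((v,t)\) lies on \(\gamma_o\). Then the full loop \(L_o\) crosses to \(w\) at time \(t\), explores the multi-link loop \(\gamma_w^{t}\) inside \(M_w\), and returns through the same cross. Since the edge carries exactly one link, this excursion is inserted into the orbit without changing the cyclic order of the rest of \(\gamma_o\). I will make this precise with the successor bijection on directed intervals: restricting to \(M_o\) and then splicing in the subtree beyond \(\{v,w\}\) only replaces the successor of the interval ending at \((v,t)\) by a detour that eventually returns to the interval starting at \((v,t^+)\). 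It follows that the vertex support of \(L_o\) contains the support of \(\gamma_o\) together with the supports of all \(\gamma_w^{t}\) over outward exits, and recursively the supports of their own exits.

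\textbf{Step 2: the branching structure.} Here the Galton--Watson self-similarity enters. Condition on \(M_o\), on the link configuration inside it, and on the set of edges leaving \(M_o\) together with their link counts. For each outward one-link exit to \(w\) at time \(t\), the subtree \(T_w\) is, under the annealed law, again an independent GW tree with offspring law \(Z\), carrying independent Poisson links. This holds because the exploration only revealed the single-link status of the edge \(\{v,w\}\), which is measurable with respect to the edge itself, and not anything inside \(T_w\).

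The circle at \(w\) is entered at time \(t\). By rotation invariance of Poisson processes on the circle \([0,1]\), the number of outward exits of \(\gamma_w^{t}\), excluding the entering uni-link as the definition stipulates, has the same law as \(C^{(1)}(o)\). One subtlety needs checking: at \(w\) the entering edge is a parent edge, whereas at \(o\) there is none. Since the parent edge has exactly one link, it is not in \(M_w\), so \(M_w\) lies within \(T_w\). Its law then coincides with that of \(M_o\) in a fresh tree, up to the rotation by \(t\).

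\textbf{Step 3: conclusion.} Distinct exits lead to disjoint subtrees, so the children's exit counts are i.i.d.\ copies of \(C^{(1)}(o)\), independent of the past. The exit counts generated by the exploration therefore form a Galton--Watson process \((N_k)\) with offspring mean \(m=\E_\beta^{\mathrm{ann}}[C^{(1)}(o)]>1\). If \(m=\infty\), truncate the offspring at a large level so that the truncated mean exceeds \(1\); survival of the truncated process still implies survival of the original one.

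By standard branching theory, \((N_k)\) survives with positive probability. On survival, \(L_o\) visits infinitely many distinct vertices, one per renewal, since the subtrees are disjoint and each contributes at least its root \(w\). Hence \(\P_\beta^{\mathrm{ann}}(|L_o|=\infty)>0\). A residual issue is whether \(\gamma_o\) is a finite orbit. If \(M_o\) is infinite, the loop may already be infinite, which only helps; otherwise the exploration of each generation is finite and well defined.

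\textbf{Main obstacle.} I expect the hardest part to be the independence and identical distribution claim in Step 2. Its fine points are the rotation invariance at the entry time \(t\), and the fact that conditioning on a child edge having exactly one link does not bias \(T_w\). Its heaviest point is that on an infinite tree, Step 1 must be checked against the paper's orbit definition rather than the finite trajectory rule. For that I would argue via finite subconfigurations, which the paper states agree with the usual rule, and then take increasing limits.
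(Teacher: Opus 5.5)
Your proposal is correct and follows essentially the paper's route: reduce to the multi-link loop, then use Galton--Watson self-similarity and rotation invariance at the entry time to obtain an i.i.d.\ branching process of outward one-link exits with offspring law $C^{(1)}(o)$. Survival of that process forces $|L_o|=\infty$, and your truncation replaces the paper's generating-function argument for infinite mean.

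Two points need tightening. First, a uni-link excursion need not return. If it is infinite, $|L_o|=\infty$ already. If it is finite, bijectivity of the successor map on a tree forces a return through the unique cross; the paper argues exactly this way, by contradiction from a finite full orbit, so no finite-volume limits are needed. Second, the preliminary case split should be on whether $\gamma_o$, not $M_o$, is infinite, as in the paper's reduction via Lemma~\ref{lem:addunilinks}.
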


\begin{proof}
If the multi-link loop \(\gamma_o\) has infinite vertex support with positive
\(\P_\beta^{\mathrm{ann}}\)-probability, Lemma~\ref{lem:addunilinks} implies that
the full loop is infinite with positive annealed probability. We may therefore
\revision{assume that \(\gamma_o\) is finite almost surely. Its finite vertex support,
local finiteness of the tree, and finiteness of the Poisson counts leave only
finitely many outward one-link exits. In particular, \(C^{(1)}(o)<\infty\)
almost surely.}

\revision{Start from \(\gamma_o\) and explore its outward one-link exits. Trace each
new multi-link loop and reveal the incident information needed to list its
exits, withholding the descendant environment beyond each exit until that
exit is processed. Contracting the multi-link components of the underlying
tree leaves a forest. Thus every outward uni-link enters a previously
unvisited component, and distinct exits lead to disjoint descendant
subtrees.}

\revision{When an exit enters a child \(x\), its parent edge carries only one cross
and is absent from the multi-link graph. Since \(T\) is a tree, the full-tree
component \(M_x\) is contained in the descendant subtree below \(x\).
The entering uni-link is excluded from the offspring count.
Let \(\mathcal E_n\) record the information revealed while processing
generations \(0,\ldots,n-1\), including the entry edges and times of the
unprocessed generation-\(n\) frontier. Conditional on this history, the
descendant environments at that frontier are independent copies of the
unconditioned Galton--Watson tree with its link processes: no offspring
variable or descendant link process beyond a frontier entrance has yet
been exposed. Each entry time is \(\mathcal E_n\)-measurable. Shifting by
that time preserves the conditional descendant Poisson law, so each fresh
exit count has the law of \(C^{(1)}(o)\), independently of the other counts.}

\revision{The generation sizes therefore form a Galton--Watson process with offspring
\(C^{(1)}(o)\). To include a possibly infinite offspring mean, put
\(g(s)=\E_\beta^{\mathrm{ann}}[s^{C^{(1)}(o)}]\).
Since \(g'(1-)>1\), there is \(s<1\) with \(g(s)<s\). Continuity gives a fixed
point below one, and the extinction probabilities obtained by iterating
\(g\) from zero increase to the smallest fixed point. This branching
exploration consequently survives with positive probability.}

\revision{Suppose now that the full root orbit were finite. Every outward uni-link
it crosses must be crossed back, because removing its edge disconnects the
tree. The unique cross on that edge makes the returning excursion resume
immediately after the same cross time. Contracting these returning excursions
in the finite full orbit gives the entire restricted multi-link loop in
the current component, so the full orbit crosses every outward exit of
that loop. Apply the same argument in each component reached through such
an exit. Inductively, all vertices of the branching exploration would lie
in the finite full-orbit support. On survival the exploration reaches
infinitely many distinct components, which is a contradiction. Thus, on
survival, the full root loop has infinite vertex support.}
\end{proof}

For our infinite-mean regime, the remaining input is the offspring estimate. The argument in
Subsection~\ref{subsec:gwimp} proves the hypothesis of Lemma~\ref{lem:multi} for arbitrarily small \(\beta>0\),
using boundary-pairing results. \revision{Since \(C^{(1)}(o)\) counts only one-link exits whose cross time is
visited by the multi-link loop, we estimate the time that \(\gamma_o\) spends
on children reached by exactly two crosses and with no multi-link child
edges continuing below them. We call them \emph{tagged clean double leaves}.}
We show that, when most
multi-link endpoints at a vertex come from such leaves, the multi-link loop
visits a positive fraction of them in expectation. \revision{The infinite
offspring mean then implies that the expected total time spent on tagged
children of the root is infinite. The exit calculation below gives
\(\E_\beta^{\mathrm{ann}}[C^{(1)}(o)]=\infty\), so the branching criterion applies.}

\revision{The following boundary-pairing estimate is the other key ingredient of the proof of Theorem~\ref{thm:gwzero}.} Its proof is postponed to
Subsection~\ref{subsec:finglu}.

\begin{proposition}[Boundary-pairing estimate]
\label{prop:small}
There are constants \(\delta_0>0\), \(a>0\), and \(p>0\) such that the
following holds. Let \(\chi\) be a deterministic permutation of a set \(H\)
with \(|H|=2K\). Suppose that the number of points of \(H\) lying in
\(\chi\)-cycles of length one or two is at most \(\delta_0 K\). Let \(\tau\)
be a uniform fixed-point-free involution on \(H\), independent of \(\chi\).
Then, uniformly in \(\chi\) and in every prescribed
\(j\in H\) which does not lie in a \(\chi\)-cycle of length one or two,
\[
        \mathbb P\bigl(|C_{\tau\chi}(j)|\ge aK\bigr)
        \ge p-o_K(1).
\]
\end{proposition}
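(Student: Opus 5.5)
We reformulate the statement as a question about random surfaces and then run a peeling exploration in the spirit of Budzinski, Curien, and Petri \cite{budzinski_curien_petri_2019_universality}. The cycles of \(\chi\) are read as polygons whose sides are the points of \(H\), oriented cyclically by \(\chi\). The involution \(\tau\) is a uniform gluing of these sides in pairs. The cycles of \(\tau\chi\) are then exactly the vertices of the resulting surface, and \(|C_{\tau\chi}(j)|\) is the degree of the vertex at the corner following side \(j\).

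Under this dictionary, the hypothesis on \(\chi\) says that at most \(\delta_0K\) sides belong to monogons or digons. This is precisely the regime in which Budzinski--Curien--Petri control the peeling process. In their setting, faces of degree at least three dominate, so the total number of vertices is small, and a typical vertex, in particular the root vertex, has degree of order the total perimeter with probability bounded below. The plan is to transfer their estimate, with \(j\) as the root corner. Uniformity in \(\chi\) is automatic, because their bounds depend on the face-degree sequence only through the proportion of small faces.

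To make the transfer explicit, and to see where the constants come from, I would run the following exploration of the orbit of \(j\). Set \(x_0=j\) and \(y_k=\chi(x_k)\). If \(\tau(y_k)\) is not yet revealed, draw it uniformly among the unrevealed points other than \(y_k\). Put \(x_{k+1}=\tau(y_k)\). Each fresh draw reveals two points, so after \(m\le aK\) steps at least \(2K-2m\) points remain unrevealed. The orbit closes only when \(x_{k+1}=j\), and this can happen in two ways.
\begin{enumerate}[(i)]
\item A fresh draw lands on \(j\). For a single draw this has probability at most \(1/(2K-2m-1)\).
\item The partner \(t=\tau(j)\) was revealed earlier, at the step where \(y_k=j\), and the walk later reaches \(\chi^{-1}(t)\).
\end{enumerate}
Case (i) contributes at most \(Ca\) in total by a union bound over \(aK\) draws.

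Case (ii) is the genuine difficulty, and it is exactly where short \(\chi\)-cycles matter. If \(\chi\) is the identity, then \(\tau\chi=\tau\) has only \(2\)-cycles, so the hypothesis cannot be dropped. The closing configurations after a fresh draw are those in which the drawn point sits, together with already revealed material, in a short \(\chi\)-cycle. They also include configurations in which a deterministic run through revealed pairs leads back to \(\chi^{-1}(t)\). I would bound the probability that a given fresh draw lands in such a bad set by \(O(\delta_0)+O(m/K)\). Summing over steps would then give closure before \(aK\) steps with probability at most \(C(a+\delta_0)+o_K(1)\), which yields \(p\) once \(a\) and \(\delta_0\) are chosen small.

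The main obstacle is controlling deterministic excursions: runs of consecutive steps whose \(y_k\) are already revealed. These can revisit the explored region and steer the walk toward \(\chi^{-1}(t)\) non-uniformly. A naive union bound does not handle them, because the walk through revealed pairs is not fresh randomness. I would try first to show that the number of steps spent in such runs before time \(aK\) is itself \(O(a^2K)\) with high probability, since the revealed set has size \(O(aK)\). Each excursion is then preceded by a fresh draw landing in a set of size \(O(aK)\), which should give a bound of the required form. If this bookkeeping does not close cleanly, I would instead import directly the Budzinski--Curien--Petri peeling bound on the root-vertex degree in gluings with few faces of degree one or two. I would treat \(j\) as the root edge, which is legitimate by uniformity over the choice of root among sides not lying in short cycles. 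The \(o_K(1)\) then absorbs their finite-size error terms.
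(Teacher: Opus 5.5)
\paragraph{Gap in the closure estimate.} Your dictionary is the right one: cycles of $\tau\chi$ are the vertices of the glued surface, and $|C_{\tau\chi}(j)|$ is the degree at the corner after $j$. Your target bound is also the right kind: the orbit of $j$ should close within about $aK$ reveals with probability at most $C(a+\delta_0)+o_K(1)$. The step meant to deliver that bound fails, however.

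A per-draw bad probability of $O(\delta_0)+O(m/K)$, summed over the roughly $aK$ fresh draws, gives $O(\delta_0 aK+a^2K)$, not $C(a+\delta_0)$. Your bookkeeping fallback has the same defect. Knowing that excursions start with probability only $O(a)$ per draw says nothing about whether one of the order-$K$ excursions closes the orbit, unless each closes it with conditional probability $O(1/K)$.

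You are missing the structural fact that supplies exactly this. The paper runs Algorithm~R with a red corner on the \emph{current} boundary permutation, updated by the splice $\chi_{i+1}=\operatorname{Del}_{\{\ell_i,z_i\}}(\chi_i\circ(\ell_i\,z_i))$. Your deterministic runs through revealed pairs are thereby absorbed into $\chi_i$. The new left side is always the first unexposed side along the reverse orbit (Lemma~\ref{lem:direct-orbit}). Every exposure is a fresh uniform reveal (Proposition~11 of \cite{budzinski_curien_petri_2019_universality}). The exploration can stop at exposure $i$ in only two ways:
\begin{itemize}
\item a strong closure $z_i=r_i$, which is one specified side and has hazard exactly $1/D_i$;
\item a loop closure, which needs the active boundary to be a monogon and $z_i$ to lie on another monogon.
\end{itemize}
So the bad set is a single side, except at monogon-active times, where it is the set of current monogons.

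\paragraph{Control of the exceptional times.} This is the second missing ingredient, and it is not a static statement about short cycles of $\chi$. The exploration itself creates monogons and digons, for example when the active cycle splits. The paper first uses Proposition~13 of \cite{budzinski_curien_petri_2019_universality} to keep the short-boundary mass $R_i$ below $D_b\delta N$ up to time $bN$ with high probability. It then runs a drift argument for $r^{P_i}$, stopped at the closures and when $R_i$ exceeds its threshold. This shows that the active boundary is a monogon only $O(1)$ times in expectation, so loop closures cost $O(\delta)$ in total (Lemma~\ref{lem:direct-swallow}). The same Proposition~13, together with equation~(11) there, bounds the number of active times with $P_i\le 2$. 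At those times the comparison of strong closures with an independent Bernoulli$(1/D_i)$ clock can fail.

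\paragraph{The import-from-BCP fallback.} Importing a root-degree bound from Budzinski--Curien--Petri does not close the gap either, for three reasons.
\begin{itemize}
\item The paper's proof is precisely the derivation of such a bound from their peeling ingredients, rather than an appeal to a ready-made statement.
\item Having few vertices does not by itself force the root vertex to have degree of order $N$. A logarithmic number of vertices, each of degree about $N/\log N$, would be consistent with it.
\item A bound for a uniformly chosen root would not give the required bound for every prescribed $j$, because the law of $|C_{\tau\chi}(j)|$ depends on $j$ through $\chi$.
\end{itemize}
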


\subsection{Contraction}
\label{subsec:endpt}

For this subsection fix \(\beta>0\) and a locally finite rooted tree \(T\), and
work under the quenched law \(\P_{T,\beta}\). For an independent uniform circle
time \(U\), write
\[
        \widehat{\P}_{T,\beta}
        =
        \P_{T,\beta}\otimes\operatorname{Leb}_{[0,1]},
        \qquad
        \widehat{\E}_{T,\beta}
        =
        \E_{\widehat{\P}_{T,\beta}}.
\]
On the annealed space used later, the same sigma-fields are enlarged by
\(\sigma(T)\).

At a vertex \(x\), a link endpoint is a point of \(\{x\}\times[0,1]\) incident
to a link. A multi-link endpoint at \(x\) is an endpoint of a link on an edge
incident to \(x\) which carries at least two links. Root-side endpoints are
endpoints coming from the edge to the parent of \(x\), while child endpoints
come from edges to children of \(x\). Let \(E_x^\downarrow\) be the edge set
of the descendant subtree rooted at \(x\); when \(x\ne o\), it does not contain the parent edge of \(x\). If
\(\Theta_e\) denotes the Poisson point configuration on \(e\), put
\[
        \mathcal R_x
        =
        \sigma\bigl(\Theta_e:e\notin E_x^\downarrow\bigr).
\]
This is the root-side sigma-field: it determines the endpoints and excursions
at \(x\) coming from the component toward the parent and is independent of the
link configurations on \(E_x^\downarrow\). For the root it is trivial. All
statements in this subsection hold conditionally on \(\mathcal R_x\).

Every child \(y\) of \(x\) such that the edge \(\{x,y\}\) carries exactly
two crosses and no edge from \(y\) to one of its children carries two or more
crosses is called a tagged clean double leaf. For each such child, the two
endpoints at \(x\) belonging to \(\{x,y\}\) are tagged endpoints. All remaining
multi-link endpoints at \(x\) are non-tagged. The non-tagged endpoints, including any conditioned root-side
endpoints, are used only through the first-return data they induce between
endpoints at \(x\). For a tagged clean double leaf \(y\) and a starting time
\(s\), write \(\ell_y(\gamma_x^s)\) for the Lebesgue measure of the set of
times on \(\{y\}\times[0,1]\) whose directed intervals belong to
\(\gamma_x^s\).

\begin{lemma}
\label{lem:finreturn}
Fix a vertex \(x\) of \(T\). Let
\(H_x\) be the set of multi-link endpoints at \(x\). Say that an endpoint
\(h\in H_x\) has finite first return if the directed multi-link orbit reached
immediately after crossing \(h\) contains a directed interval on the circle of
\(x\). The first such interval determines the return endpoint at \(x\). Let
\(D_x\) be the event that some endpoint in \(H_x\) has no finite first return.
Then
\[
\begin{split}
&\widehat{\P}_{T,\beta}\bigl(
        \gamma_x^U\text{ has infinite vertex support}
        \bigm|\mathcal R_x\bigr)>0 \hspace{5 pt}\text{almost surely on }
\bigl\{\P_{T,\beta}(D_x\mid\mathcal R_x)>0\bigr\}.
\end{split}
\]
\end{lemma}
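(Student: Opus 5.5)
Fix a realisation of \(\mathcal R_x\) with \(\P_{T,\beta}(D_x\mid\mathcal R_x)>0\). The plan is to show that on \(D_x\) the multi-link loop through some point of the circle \(\{x\}\times[0,1]\) has infinite vertex support, and then transfer this to the uniformly chosen starting time \(U\).

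The first step is to analyse \(D_x\). On \(D_x\), pick an endpoint \(h\in H_x\) with no finite first return. The directed multi-link orbit started immediately after crossing \(h\) never revisits the circle of \(x\). The multi-link orbit is a bijection's orbit on a countable state space. If it were finite, it would be periodic and would therefore return to the interval on the circle of \(x\) that ends at \(h\); this is a contradiction. So the orbit is infinite. Every vertex circle carries only finitely many directed intervals, because only finitely many cross endpoints sit there almost surely. An infinite orbit therefore visits infinitely many distinct vertices, i.e.\ it has infinite vertex support. This orbit is a loop of the configuration restricted to \(M_x\). It also contains the directed interval on the \(x\)-circle that ends at the endpoint \(h\), since that interval's successor is the orbit after crossing \(h\). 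Hence on \(D_x\) there is a nondegenerate interval \(I_h\subset\{x\}\times[0,1]\) such that \(\gamma_x^s\) has infinite vertex support for every \(s\in I_h\).

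The second step is measurability and the transfer to \(U\). Since \(H_x\) is finite and each interval has positive length, \(D_x\) implies that the random set \(S=\{s\in[0,1]:\gamma_x^s \text{ infinite}\}\) has positive Lebesgue measure. By Fubini under \(\widehat{\P}_{T,\beta}=\P_{T,\beta}\otimes\operatorname{Leb}\), with \(U\) independent of both the crosses and \(\mathcal R_x\),
\[
\widehat{\P}_{T,\beta}\bigl(\gamma_x^U\text{ infinite}\bigm|\mathcal R_x\bigr)=\E_{T,\beta}\bigl[\operatorname{Leb}(S)\bigm|\mathcal R_x\bigr]\ge \E_{T,\beta}\bigl[\operatorname{Leb}(S)\1_{D_x}\bigm|\mathcal R_x\bigr].
\]
The right-hand side is positive because \(\operatorname{Leb}(S)>0\) on \(D_x\) and \(D_x\) has positive conditional probability. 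This requires the joint measurability of \((s,\omega)\mapsto \1\{\gamma_x^s\text{ infinite}\}\). To obtain it, note that the loop through \((x,s)\) depends on \(s\) only through which directed interval contains \(s\), so it is a countable union of measurable pieces. Work with a regular conditional version given \(\mathcal R_x\), which is available since everything is built from countably many Poisson processes.

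The main obstacle is the first step, specifically the claim that a non-returning orbit is genuinely infinite and not merely ill-defined. Two points need checking. First, the successor map of Section~\ref{sec:intro} restricted to \(M_x\) is still a bijection on the directed intervals, so orbits through \(h\) are well defined in both directions. Second, the interval ending at \(h\) lies on the \(x\)-circle, which is what makes the periodicity argument rule out finiteness. Finiteness of the number of intervals per circle holds almost surely, so the argument is pathwise and uses the conditioning only through positivity of \(\P_{T,\beta}(D_x\mid\mathcal R_x)\).
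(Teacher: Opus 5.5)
Your proposal is correct and follows essentially the paper's argument. Both proofs take an endpoint \(h\) without finite first return and show that every start time in the preceding interval \(I_h\) on the circle of \(x\) gives an infinite multi-link loop: a finite orbit would be periodic and so would return to \(I_h\). Both then average over \(U\). The only difference is presentational: the paper uses \(V_x=\max\{|I_h|: h\text{ has no finite first return}\}\) and the bound \(\widehat{\P}_{T,\beta}(\cdot\mid\mathcal R_x)\ge\E_{T,\beta}[V_x\mid\mathcal R_x]\), which plays the same role as your bound by \(\E_{T,\beta}[\operatorname{Leb}(S)\1_{D_x}\mid\mathcal R_x]\).
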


\begin{proof}
The set \(H_x\) is finite almost surely, because \(T\) is locally finite and
each incident edge carries finitely many crosses. For \(h\in H_x\), let
\(I_h\) be the cyclic interval on the circle of \(x\) immediately preceding
\(h\) and containing no other endpoint of \(H_x\). The endpoint times are
distinct almost surely, so \(|I_h|>0\). If \(h\) has no finite first return and
\(U\in I_h\), then the directed multi-link orbit from
\((x,U)\) contains the interval \(I_h\), crosses \(h\), and then follows the
orbit reached immediately after \(h\). If the resulting loop had finite vertex
support, then local finiteness would give only finitely many directed intervals
in that support. The successor orbit would then be periodic. Since the
successor map is bijective and \(I_h\) is the predecessor of the state reached
after crossing \(h\), this periodic orbit would have to return to an interval
on the circle of \(x\), contradicting the choice of \(h\). Define
\[
        V_x
        =
        \max\{|I_h|:h\in H_x\text{ has no finite first return}\},
\]
with the maximum of the empty set equal to zero. Conditional on the link
configuration, the probability over \(U\) of an infinite multi-link loop is at
least \(V_x\), and \(V_x>0\) exactly on \(D_x\). Therefore
\[
        \widehat{\P}_{T,\beta}\bigl(
        \gamma_x^U\text{ has infinite vertex support}\bigm|\mathcal R_x\bigr)
        \ge
        \E_{T,\beta}[V_x\mid\mathcal R_x],
\]
and the right-hand side is strictly positive almost surely on
\(\{\P_{T,\beta}(D_x\mid\mathcal R_x)>0\}\).
\end{proof}

\begin{lemma}
\label{lem:addunilinks}
For \(\P_{T,\beta}\)-almost every link configuration and every \(s\in[0,1]\) not
equal to a link time incident to \(o\), if \(\gamma_o^s\) has infinite vertex
support, then the full loop through \((o,s)\) has infinite vertex support.
\end{lemma}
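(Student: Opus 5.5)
The plan is a contrapositive argument. Suppose the full loop through \((o,s)\) has finite vertex support, and show that \(\gamma_o^s\) must then have finite vertex support as well. The key structural fact is the one already used in the proof of Lemma~\ref{lem:multi}. The full link configuration differs from the configuration restricted to \(M_o\) only through edges that either lie outside \(M_o\) or carry exactly one link. Since \(T\) is a tree, removing any edge disconnects it. So every edge \(e=\{v,w\}\) on the boundary of \(M_o\), with \(v\in M_o\), \(w\notin M_o\), carries at most one link. If it carries none, the full orbit never leaves \(M_o\) through \(e\). If it carries one, at time \(t\), then any full orbit crossing it at \((v,t)\) enters the component on the \(w\)-side. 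That side is finite or infinite, but the orbit can only come back to \(v\)'s side through the same edge.

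Step one is to make the excursion structure precise in the finite case. Work on the almost sure event of finitely many incident endpoints per circle with distinct times, and take \(s\) not a link time at \(o\), so that \((o,s)\) lies in the interior of a directed interval. Assume the full orbit \(O\) is finite, hence periodic. Consider a maximal stretch of \(O\) that lies in the \(w\)-side subtree after crossing the unique link on \(e\) at time \(t\). This stretch is finite, so \(O\) must cross back. The only link on \(e\) is the one at time \(t\), so \(O\) returns to the \(v\)-circle immediately after time \(t\). I will also check directly from the successor rule that this is the same directed interval of the \(v\)-circle that the restricted configuration would enter at time \(t\): there the endpoint is absent, and the interval continues through \(t\). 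More carefully, in the restricted configuration the directed interval on \(v\) containing time \(t\) is the concatenation of the full-configuration intervals ending at \(t\) and starting just after \(t\). Deleting the excursion therefore merges these two intervals into the restricted one.

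Step two is the contraction. Delete every such excursion from \(O\), and also merge directed intervals that are split only by endpoints of single-link edges inside \(M_o\); these are not counted there, since \(M_o\) uses only edges with at least two links. An edge inside \(M_o\) carries at least two links, so its crosses are present in both configurations and the successor rule agrees at them. After contraction we obtain a cyclic sequence of restricted directed intervals in which each successor is the restricted successor. Hence it is the restricted orbit through \((o,s)\), which is \(\gamma_o^s\). Its vertex support is contained in the support of \(O\), which is finite, and this proves the contrapositive.

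The main obstacle is the bookkeeping in step two when a single circle carries several uni-link endpoints and several multi-link endpoints interleaved. One must verify that the contraction is well defined and that the interval reached after an excursion is the correct restricted interval. I would handle this by induction on the number of uni-link endpoints on circles of the finite support of \(O\). At each step I remove one uni-link edge's excursion and check the local successor identity at that single time \(t\), using that a uni-link is the only cross on its edge. A second, easier point is that edges of \(M_o\) with a single link cannot occur, since membership in \(M_o\) requires at least two links. Uni-links with both endpoints in \(M_o\) are therefore boundary edges of \(M_o\), and the tree property makes each such crossing a genuine excursion into a subtree disjoint from \(M_o\).
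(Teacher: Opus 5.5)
Your proof is correct and is essentially the paper's argument in contrapositive form. The paper argues directly: either some uni-link excursion is infinite, or every excursion comes back through the same single cross and the full orbit traces all of \(\gamma_o^s\). You instead assume the full orbit is finite, let its periodicity force every excursion to return, and contract the excursions to recover \(\gamma_o^s\); the paper uses exactly this reasoning at the end of the proof of Lemma~\ref{lem:multi}. One wording slip: no uni-link edge has both endpoints in \(M_o\); you mean uni-link edges with one endpoint in \(M_o\), and by the tree property these lead into subtrees disjoint from \(M_o\).
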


\begin{proof}
Compare the full directed orbit with \(\gamma_o^s\). Whenever a uni-link sends it
into the component not containing \(o\) after that edge is removed, an infinite child-side orbit proves the claim.
If the child-side orbit has finite vertex support, local finiteness makes its state space finite. The tree provides
no other edge back; without a return through the same uni-link, the entered state would have both an internal
predecessor in a finite directed cycle and the incoming predecessor, contradicting bijectivity. After the return,
the full orbit resumes \(\gamma_o^s\). By local finiteness, only finitely many uni-link endpoints lie between
successive multi-link endpoints of the restricted orbit. \revision{Hence either
some excursion has infinite support, or induction along those endpoints shows
that the full orbit reaches all of them after finite excursions; in both cases
its vertex support is infinite.}
\end{proof}

Now fix a vertex \(x\) and condition on \(\mathcal R_x\). At the root, Lemmas~\ref{lem:finreturn}
and~\ref{lem:addunilinks} handle \(\{\P_{T,\beta}(D_o)>0\}\). For the remaining analysis we work
on \(\{\P_{T,\beta}(D_x\mid\mathcal R_x)=0\}\), where every multi-link endpoint has finite first return almost surely.

\revision{Let \(T_x\) be the set of endpoints belonging to tagged clean double leaves
of \(x\), and put \(K_x=|T_x|/2\). Let \(B_x\) be the set of all other
multi-link endpoints at \(x\), and write \(H_x=T_x\sqcup B_x\).
Here \(H_x\) records the endpoint set; its points need not belong to one
loop. The following maps describe motion along the circle at \(x\) and
excursions away from \(x\) separately.
The permutation \(\sigma\) sends each endpoint in \(H_x\) to the next one
in increasing cyclic time on the circle of \(x\).
For \(a\in B_x\), leave \(x\) through the cross at \(a\) and follow the
multi-link trajectory until it first returns to \(x\). If it arrives at
endpoint \(b\), set \(\rho_{B_x}(a)=b\). Stop upon arrival, before any subsequent
motion along the circle at \(x\): thus \(\rho_{B_x}\) is a
\emph{departure-to-return map}. Since \(T\) is a tree, the return uses the
same edge, so \(\rho_{B_x}\) maps \(B_x\) into itself.
If two departures had the same returning endpoint, tracing the bijective
successor orbit backwards from that return to its preceding visit to \(x\)
would recover the same departure. Thus \(\rho_{B_x}\) is injective on the finite
set \(B_x\), and hence is a permutation. When \(H_x=\varnothing\), all
endpoint permutations are understood to be empty.
Figure~\ref{fig:endpoint-returns} illustrates these endpoint maps.}

\begin{figure}[tbp]
\centering
\begin{tikzpicture}[>=Stealth,x=0.72cm,y=0.72cm]
  \def\R{3.5}

  % --- boundary circle at x, and x itself ---
  \draw[gray!60] (0,0) circle (\R);
  \node[draw, circle, fill=gray!40, minimum size=2mm, inner sep=0pt] (X) at (0,0) {};
  \node[above] at (0,0.35) {$x$};

  % --- the 8 endpoints of H_x, in cyclic order ---
  % 1,2 = tagged pair (T_x); 3,7 = edge A endpoints (B_x)
  % 5,6 = tagged pair (T_x); 4,8 = edge B endpoints (B_x)
  \foreach \i/\ang/\col in {1/90/coralc,2/45/coralc,3/0/tealc,4/-45/tealc,
                            5/-90/coralc,6/-135/coralc,7/180/tealc,8/135/tealc} {
    \node[draw, circle, fill=\col, minimum size=5mm, inner sep=0pt] (P\i) at (\ang:\R) {};
    \node[font=\small] at (\ang:\R+0.7) {\i};
  }

  % --- sigma: cyclic order around the circle ---
  \foreach \i/\j in {1/2,2/3,3/4,4/5,5/6,6/7,7/8,8/1}
    \draw[->, gray!70, thick, shorten <=1pt, shorten >=2pt]
      (P\i) to[bend left=15] (P\j);

  % --- tau: inward bows related by a half-turn, clear of the circle arrows. ---
  % These are undirected tagged-pair relations; bar(rho_{B_x}) fixes their endpoints.
  \draw[dashed, coralc, thick, shorten <=2pt, shorten >=2pt]
    (P1) to[bend right=22] (P2);
  \draw[dashed, coralc, thick, shorten <=2pt, shorten >=2pt]
    (P5) to[bend right=22] (P6);

  % --- rho_{B_x}: identical inward curves, rotated by 45 degrees. ---
  % The alternating endpoint pairs require a crossing inside the disk.
  % A white underlay separates the curves at that crossing; it is not a junction.
  \draw[dashed, black, thick, <->, shorten <=2pt, shorten >=2pt]
    (P3) .. controls (2,-1.8) and (-2,-1.8) .. (P7);
  \begin{scope}[rotate=-45]
    \coordinate (returncontrolone) at (2,-1.8);
    \coordinate (returncontroltwo) at (-2,-1.8);
  \end{scope}
  \draw[dashed, black, thick, <->, shorten <=2pt, shorten >=2pt,
        preaction={draw=white,line width=3pt,solid,-}]
    (P4) .. controls (returncontrolone) and (returncontroltwo) .. (P8);

  % --- legend beside the circle, separating tagged pairs and non-tagged returns ---
  \begin{scope}[shift={(4.9,1.9)}, font=\small]
    \node[draw, circle, fill=coralc, minimum size=3mm, inner sep=0pt] at (0,0) {};
    \node[right] at (0.8,0) {\revision{$T_x=\{1,2,5,6\}$ -- tagged endpoints}};
    \node[draw, circle, fill=tealc, minimum size=3mm, inner sep=0pt] at (0,-0.7) {};
    \node[right] at (0.8,-0.7) {\revision{$B_x=\{3,4,7,8\}$ -- other endpoints}};
    \draw[dashed, coralc, thick] (-0.55,-1.5) -- (0.55,-1.5);
    \node[right] at (0.8,-1.5) {\revision{$\tau=(1\,2)(5\,6)$ -- tagged pairs}};
    \draw[->, gray!70, thick] (-0.55,-2.25) -- (0.55,-2.25);
    \node[right] at (0.8,-2.25) {\revision{$\sigma=(1\,2\,3\,4\,5\,6\,7\,8)$}};
    \draw[<->, black, dashed, thick] (-0.55,-3.175) -- (0.55,-3.175);
    \node[right] at (0.8,-3.175) {\revision{$\rho_{B_x}=(3\,7)(4\,8)$}};
    \node[right] at (0.8,-3.75) {\revision{departure-to-return map}};
  \end{scope}
\end{tikzpicture}
\caption{\revision{Endpoint maps on the circle at \(x\). The numbers label endpoints,
not tree vertices. The circle arrows give
\(\sigma=(1\,2\,3\,4\,5\,6\,7\,8)\); the orange dashed curves without
arrowheads give the tagged pairing \(\tau=(1\,2)(5\,6)\). The black dashed arrows illustrate
\(\rho_{B_x}=(3\,7)(4\,8)\), they encode off-\(x\) returns rather than tree edges.}}
\label{fig:endpoint-returns}
\end{figure}

\revision{We now separate the motion along the circle and through non-tagged
excursions from the tagged excursions, whose pairing \(\tau\) will be
inserted later. Temporarily omit the tagged excursions while retaining
their endpoints as marked points on the circle at \(x\).
Starting just after an endpoint \(h\in H_x\), move along the circle to
\(a=\sigma(h)\). If \(a\in B_x\), complete the excursion departing at \(a\)
and stop at its return endpoint. If \(a\in T_x\), stop at that marked point
without taking its cross.

Extend \(\rho_{B_x}\) to \(\bar\rho_{B_x}\) on \(H_x\) by the identity on
\(T_x\). The permutation recording one such step is
\[
        \psi=\bar\rho_{B_x}\sigma,\qquad
        \psi(h)=
        \begin{cases}
        \sigma(h), & \sigma(h)\in T_x,\\
        \rho_{B_x}\bigl(\sigma(h)\bigr), & \sigma(h)\in B_x.
        \end{cases}
\]
Whether the step includes an excursion therefore depends on the endpoint
\(\sigma(h)\) just reached. For example, Figure~\ref{fig:endpoint-returns}
gives \(\psi(2)=\rho_{B_x}(3)=7\), whereas \(\psi(4)=5\).}
The first return of \(\psi\) to \(T_x\) is denoted by
\[
        \chi(t)=\psi^{m(t)}(t),
        \qquad
        m(t)=\min\{m\ge 1:\psi^m(t)\in T_x\}.
\]
\revision{Thus \(\chi\) follows the circle and non-tagged excursions from a
tagged return to the next tagged departure. Applying \(\tau\) then completes
that tagged excursion, so \(\tau\chi\) gives the next tagged return endpoint.}

We make the remaining conditioning explicit. For each edge \(e\), write \(N_e=|\Theta_e|\),
let \(\mathscr T_x\) be the set of edges from \(x\) to its tagged clean double leaves, and define the censored configuration
\[
        \Theta_e^{(x)}
        =
        \begin{cases}
        \dagger, & e\in\mathscr T_x,\\
        \Theta_e, & e\notin\mathscr T_x.
        \end{cases}
\]
Here \(\dagger\) is a fixed symbol outside the configuration space; when \(e\in\mathscr T_x\), the count \(N_e=2\) remains visible but both cross times are suppressed.
The set \(\mathscr T_x\) is determined by the cross counts. Set
\[
        \mathcal F_x
        =
        \sigma\bigl((N_e,\Theta_e^{(x)}):e\in E(T)\bigr).
\]
This field contains \(\mathcal R_x\) and the complete non-tagged first-return
data, while withholding precisely the two cross times on each edge in
\(\mathscr T_x\),
\revision{in particular, \(K_x\), \(B_x\), and \(\rho_{B_x}\) are \(\mathcal F_x\)-measurable.}

Conditional
on \(\mathcal F_x\), the \(2K_x\) still-hidden tagged cross times are independent
uniform points of \([0,1]\).
Let \(Q_x\) be their unordered set of positions on the circle of \(x\), and
put
\[
        \mathcal G_x
        =
        \mathcal F_x\vee\sigma(Q_x).
\]
The endpoint set \(H_x\), its cyclic order \(\sigma\), and the permutation
\(\chi\) are \(\mathcal G_x\)-measurable. The residual randomness assigns the
points of \(Q_x=T_x\) in pairs to the tagged children; denote the resulting
pairing by \(\tau\). The Dirichlet variables appearing in the proof of Lemma~\ref{lem:mass} below are the spacings between
the tagged points alone, before the non-tagged points are inserted. This choice
is important because the non-tagged return data can contain information about
the non-tagged endpoint times and thereby bias the full-circle spacings.

\begin{lemma}
\label{lem:endpt}
The map \(\chi:T_x\to T_x\) is a permutation. If \(b=|B_x|\), then
\[
        \#\operatorname{cycles}(\chi)\le b+1.
\]
Moreover, conditional on \(\mathcal G_x\), \(\tau\) is uniformly distributed over the
fixed-point-free involutions of \(T_x\). More precisely, for every bounded
function \(F\) of a matching,
\begin{equation}\label{eq:tau-uniform}
        \mathbb E[F(\tau)\mid\mathcal G_x]
        =
        \frac{1}{(2K_x-1)!!}
        \sum_{\vartheta\in\mathcal M(T_x)}F(\vartheta)
        \qquad\text{almost surely},
\end{equation}
where \(\mathcal M(T_x)\) is the set of fixed-point-free involutions of
\(T_x\), with the empty matching and \((-1)!!=1\) understood when \(K_x=0\).
\revision{The map \(\tau\chi\) takes each tagged return endpoint to the next
one along the loop.}
\end{lemma}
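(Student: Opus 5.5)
The proof splits into three parts: that \(\chi\) is a permutation, the bound on its cycle count, and the conditional law of \(\tau\).

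\textbf{Permutation property.} The map \(\psi=\bar\rho_{B_x}\sigma\) is a composition of permutations of the finite set \(H_x\), so it is a permutation. Hence \(\chi\) is the first-return (induced) map of \(\psi\) on \(T_x\). Every \(\psi\)-orbit is a finite cycle, so \(m(t)<\infty\) whenever the cycle of \(t\) meets \(T_x\), which it does since \(t\in T_x\). The induced map of a permutation on a subset is a permutation of that subset: its inverse is the first return of \(\psi^{-1}\).

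\textbf{Cycle bound.} The cycles of \(\chi\) are exactly the \(\psi\)-cycles meeting \(T_x\), so it suffices to show \(\#\operatorname{cycles}(\psi)\le b+1\). Compose with transpositions. The \(b\) points of \(B_x\) are moved only by \(\rho_{B_x}\), and \(\bar\rho_{B_x}\) is a product of at most \(b-\#\operatorname{cycles}(\rho_{B_x})\le b\) transpositions. The map \(\sigma\) is a single \(|H_x|\)-cycle (or empty). Multiplying a permutation by one transposition changes the number of cycles by exactly \(\pm1\). Starting from the single cycle \(\sigma\), applying at most \(b\) transpositions therefore gives at most \(b+1\) cycles. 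The degenerate case \(H_x=\varnothing\) is trivial.

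\textbf{Conditional law of \(\tau\).} Condition on \(\mathcal F_x\); then the \(K_x\) tagged edges each carry two i.i.d.\ uniform times, with all \(2K_x\) times i.i.d.\ uniform. Their joint law is exchangeable under permutations of the \(2K_x\) labels. Thus, given the unordered set \(Q_x\), the assignment of labels to points is uniform over all \((2K_x)!\) bijections. The pairing \(\tau\) is the image of this bijection under the map that forgets the order within each edge and which edge is which. Every fixed-point-free involution has \(2^{K_x}K_x!\) preimages, so the pairing is uniform over \(\mathcal M(T_x)\), which gives \eqref{eq:tau-uniform}.

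The main point of care is that \(\mathcal G_x\) contains nothing more than \(\mathcal F_x\vee\sigma(Q_x)\). In particular, \(\sigma\), \(B_x\) and \(\rho_{B_x}\), and hence \(\chi\), must not depend on which tagged child owns which point. This holds because the definition of a tagged clean double leaf involves only counts, and the non-tagged return data live on edges outside \(\mathscr T_x\). The excursions below a tagged leaf \(y\) also do not depend on the labelling: \(y\) has no multi-link child edges, so the multi-link trajectory leaving through one cross of \(\{x,y\}\) travels along \(y\)'s circle and returns through the other cross. I expect this measurability bookkeeping to be the main obstacle.

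\textbf{Interpretation of \(\tau\chi\).} By the same observation about tagged leaves, entering a tagged endpoint \(t'\) leads back to \(x\) at \(\tau(t')\). Starting at a tagged return endpoint \(t\), the loop moves along the circle and through non-tagged excursions, whose returns are \(\rho_{B_x}\), until it first reaches a tagged departure \(\chi(t)\). It then completes that tagged excursion and arrives at \(\tau\chi(t)\). So \(\tau\chi\) takes each tagged return endpoint to the next one along the loop.
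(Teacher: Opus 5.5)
Your proposal is correct and follows the paper's proof essentially step by step. You treat $\chi$ as the first-return map of the permutation $\psi=\bar\rho_{B_x}\sigma$; you get the cycle bound by writing $\bar\rho_{B_x}$ as at most $b$ transpositions applied to the single cycle $\sigma$, and you get uniformity of $\tau$ given $\mathcal G_x$ from exchangeability of the hidden tagged times, with each matching having $2^{K_x}K_x!$ labelled preimages. Your check that $\sigma$, $\rho_{B_x}$ and hence $\chi$ do not depend on which tagged child owns which point is the $\mathcal G_x$-measurability the paper asserts just before the lemma; it holds because a clean double leaf has no other multi-link edge, so its excursion returns through the partner cross.
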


\begin{proof}
\revision{If \(H_x=\varnothing\), all assertions are immediate from
the empty-set conventions in the statement.  We henceforth assume that
\(H_x\ne\varnothing\).}
The map \(\psi\) is a permutation of the finite set \(H_x\). Hence the
\(\psi\)-orbit of any \(t\in T_x\) is a finite cycle containing \(t\), and so
returns to \(T_x\). The induced first-return map is therefore a permutation of
\(T_x\).

\revision{Since \(\sigma\) is one cycle and \(\bar\rho_{B_x}\) is supported on the \(b\)
non-tagged endpoints, \(\bar\rho_{B_x}\) can be written as a product of at most}
\(b\) transpositions. Multiplication by a transposition changes the number of
\revision{cycles of a permutation by at most one, so \(\psi=\bar\rho_{B_x}\sigma\) has at}
most \(b+1\) cycles. First return to \(T_x\) creates one cycle for each
\(\psi\)-cycle meeting \(T_x\), and therefore cannot increase this number.

Conditional on \(\mathcal F_x\), the hidden tagged cross times are independent
and uniform. After their unordered set \(Q_x\) has been revealed, every
assignment of its \(2K_x\) points to the two slots of each of the \(K_x\) labelled
tagged children has the same conditional density. Every
\(\vartheta\in\mathcal M(T_x)\) has the same number \(2^{K_x}K_x!\) of such labelled
assignments. Hence \eqref{eq:tau-uniform} holds. Since
\(\chi\) is \(\mathcal G_x\)-measurable, this also gives the required
conditional independence from \(\chi\).

\revision{Starting from a tagged return endpoint, \(\chi\) follows the circle
at \(x\) and all non-tagged first-return excursions until the next tagged
departure endpoint. The excursion through that tagged child then returns
at the endpoint specified by \(\tau\). Thus \(\tau\chi\) gives the next
tagged return endpoint.}
\end{proof}

\begin{lemma}
\label{lem:mass}
\revision{Assume that the \(\mathcal R_x\)-measurable event
\(\{\P_{T,\beta}(D_x\mid\mathcal R_x)=0\}\) holds, then all first-return maps
appearing below are defined almost surely.}
There are \(\eta_0>0\), \(c_0>0\), and a deterministic sequence
\((r_k)_{k\ge0}\) with \(0\le r_k\le c_0k\) and \(r_k/k\to0\) as
\(k\to\infty\) such that the
following holds. Let \(U\)
be uniform on \([0,1]\), independently of the link configuration. For every
\(0<\eta\le\eta_0\), define
\[
        G_x(\eta)=\{|B_x|\le \eta(2K_x+|B_x|)\}.
\]
Thus \(G_x(\eta)\) says that tagged endpoints form at least a \(1-\eta\)
fraction of all multi-link endpoints at \(x\). Almost surely on this \(\mathcal F_x\)-measurable event, one has
\[
        \widehat{\E}_{T,\beta}\Bigl[
        \sum_{y\ \mathrm{tagged}} \ell_y(\gamma_x^U)
        \,\Bigm|\, \mathcal F_x
        \Bigr]
        \ge c_0 K_x-r_{K_x} .
\]
\end{lemma}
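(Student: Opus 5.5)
We prove Lemma~\ref{lem:mass} by conditioning on \(\mathcal G_x\), applying Proposition~\ref{prop:small} to \(\chi\) and \(\tau\), and converting cycle lengths into time on tagged leaves using the tagged spacings.

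\textbf{Time on a tagged leaf.} Fix a tagged clean double leaf \(y\) with cross times \(t_1<t_2\) on \(\{x,y\}\). The edges from \(y\) to its children carry at most one cross each. So in the multi-link configuration the circle of \(y\) is cut only at \(t_1,t_2\), and each of its two arcs is traversed in one tagged excursion. Entering at one endpoint traverses the arc of length \(t_2-t_1\) or \(1-(t_2-t_1)\). The loop \(\gamma_x^U\) contains the tagged endpoints of one \(\tau\chi\)-cycle \(C\), and on the circle of \(y\) it collects the arcs corresponding to the tagged returns in \(C\). Hence
\[
\sum_{y}\ell_y(\gamma_x^U)\ \ge\ \sum_{j\in C}\lambda_j ,
\]
where \(\lambda_j\) is the length of the arc on the leaf whose exit endpoint is \(j\).

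\textbf{Conditioning on \(\mathcal G_x\).} By Lemma~\ref{lem:endpt}, \(\chi\) is \(\mathcal G_x\)-measurable, has at most \(|B_x|+1\) cycles, and \(\tau\) is a uniform matching independent of \(\chi\). On \(G_x(\eta)\) we have \(|B_x|\le 2\eta K_x/(1-\eta)\). Each \(\chi\)-cycle of length one or two uses a distinct \(\psi\)-cycle, so the number of points in such short cycles is at most \(2(|B_x|+1)\le\delta_0K_x\) once \(\eta_0\) is small and \(K_x\) is large. Small \(K_x\) is absorbed into \(r_k\le c_0k\).

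\textbf{Choosing the start and applying Proposition~\ref{prop:small}.} The starting point \(U\) lands in an arc of the circle at \(x\) whose successor tagged departure is well defined. With probability at least \(\tfrac12-O(\eta)\) in total tagged spacing mass, the starting tagged point \(j\) is not in a short \(\chi\)-cycle. The excluded tagged points number at most \(\delta_0K_x\). Their Dirichlet spacings among the tagged points alone sum to \(O(\delta_0)\) with high probability, and this is exactly why the spacings are taken before the non-tagged points are inserted. Proposition~\ref{prop:small} then gives \(|C_{\tau\chi}(j)|\ge aK_x\) with probability at least \(p-o(1)\).

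\textbf{Converting cycle length to time.} It remains to show that \(aK_x\) tagged returns carry total length of order \(K_x\). Conditional on \(\mathcal F_x\), the leaf cross pairs are i.i.d. uniform, and \(\lambda_j\) is a uniform arc of \(\mathrm{Beta}\)-type law with mean \(1/2\), independent across leaves. The cycle \(C\) is chosen through \(\tau\), which is correlated with the \(\lambda\)'s.

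Uniformly over all subsets \(S\) of \(aK_x\) leaves, the sum of the arc lengths is at least \(c\,aK_x\) outside an event of probability \(o(1)\). Equivalently, at most \(\epsilon K_x\) leaves have both arcs shorter than \(\epsilon\). This follows from a binomial large-deviation bound, since each leaf has its shorter arc below \(\epsilon\) with probability at most \(2\epsilon\).

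Combining the three steps gives expectation at least \((\tfrac12-O(\eta))(p-o(1))\,c\,aK_x\), yielding \(c_0\) and an \(r_k=o(k)\).

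\textbf{Main obstacle.} The hardest step is handling the dependence between \(\tau\), the arc lengths, and the location of \(U\). The uniform-over-subsets bound is the intended fix for the correlation with the cycle. Controlling \(U\) requires that the non-tagged points not bias the tagged spacings.
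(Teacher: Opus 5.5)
Your outline follows the paper's proof: the short-cycle count from Lemma~\ref{lem:endpt}, tagged-only Dirichlet spacings, Proposition~\ref{prop:small} conditionally on $\mathcal G_x$, and discarding leaves with a short arc. However, the step where you invoke Proposition~\ref{prop:small} controls the wrong cycle. What $U$ and $\mathcal G_x$ determine is the first tagged \emph{departure} $d$. The loop enters the leaf at $d$ and comes back at $\tau(d)$, so its tagged returns form $C_{\tau\chi}(\tau(d))$ and its departures form $C_{\chi\tau}(d)$. The proposition with prescribed point $d$ bounds $C_{\tau\chi}(d)$ instead. That is the cycle of the loop that \emph{exits} the leaf at $d$, which is in general a different loop. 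Already for $K_x=1$ and $B_x=\varnothing$ one has $\tau\chi=\mathrm{id}$, and the two arcs of the leaf lie on different loops. You also cannot take $j=\tau(d)$, because the proposition requires $j$ to be fixed independently of $\tau$. The repair is the paper's. The set $C_{\chi\tau}(d)$ is the orbit of $d$ under $(\chi\tau)^{-1}=\tau\chi^{-1}$, and $\chi^{-1}$ has exactly the same $1$- and $2$-cycles as $\chi$. So Proposition~\ref{prop:small} applied to $\chi^{-1}$, with the $\mathcal G_x$-measurable point $d$ outside the short $\chi$-cycles, gives $\mathbb P(|C_{\chi\tau}(d)|\ge aK_x\mid\mathcal G_x)\ge p-o(1)$.

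Several quantitative claims also need fixing.
\begin{itemize}
\item The two arcs of a leaf have total length $1$, so ``both arcs shorter than $\epsilon$'' is impossible. You mean the shorter arc is below $\epsilon$, which has probability exactly $2\epsilon$. Hence about $2\epsilon K_x$ leaves are unbalanced, not at most $\epsilon K_x$. Allow, say, $3\epsilon K_x$; note that a cycle of $aK_x$ tagged endpoints meets at least $aK_x/2$ leaves, not $aK_x$; and take $\epsilon<a/6$.
\item The paper does this step in expectation rather than by a large-deviation bound. It subtracts $2\varepsilon\,\mathcal U$ and uses $\mathbb E[\mathcal U\mid\mathcal F_x]=2\varepsilon K_x$, which needs no exceptional event. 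Your high-probability version also works once the constants are corrected.
\item The set $S_x$ of tagged endpoints in short $\chi$-cycles depends on the tagged positions through $\sigma$ and $\chi$. Its tagged-only spacings must therefore be bounded by the sum of the $|S_x|$ \emph{largest} Dirichlet spacings, which is of order $\eta\log(1/\eta)$ rather than $O(\delta_0)$.
\item $U$ sees the full gaps $\widehat g_d$, not the tagged-only gaps. In total these fall short of the tagged-only gaps by at most the sum of the $|B_x|$ largest tagged-only spacings. This and the previous point are exactly the paper's $R_m(g)$ estimates.
\item The factor $1/2$ is unexplained. The available mass is $1-o(1)$ as $\eta\to0$, though a smaller constant would do no harm.
\end{itemize}
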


\begin{proof}
\revision{We may take \(\eta_0<1\).  If \(K_x=0\), then
\(G_x(\eta)\) implies \(B_x=\varnothing\); on that event the sum in the
assertion is empty and the claimed bound is \(0\ge0\), since the construction
below gives \(r_0=0\).  Outside \(G_x(\eta)\) there is nothing to prove.
We may therefore assume that \(K_x\ge1\).}
Put \(b=|B_x|\), and let \(S_x\subset T_x\) be the set of tagged endpoints in
one- or two-cycles of \(\chi\). By Lemma~\ref{lem:endpt},
\[
        |S_x|\le 2(b+1).
\]
On \(G_x(\eta)\),
\[
        b\le \frac{2\eta}{1-\eta}K_x .
\]
Choosing \(\eta_0\) small enough and then taking \(K_x\) large makes
\(|S_x|\le \delta_0K_x\), where \(\delta_0\) is the constant from
Proposition~\ref{prop:small}. Let \(a,p>0\) be the other constants from that
proposition, decreasing \(p\) if necessary so that \(p\le1/2\), and choose a
deterministic sequence \((\zeta_k)_{k\ge1}\) decreasing to zero that dominates
the uniform \(o_k(1)\) error there. Increase the deterministic lower cutoff for
\revision{\(K_x\) so that \(\zeta_{K_x}\le p/8\).}

For \(d\in T_x\), let \(g_d\) be the length of the cyclic interval immediately
preceding \(d\) when only the tagged points are retained, and let
\(\widehat g_d\) be the length of the interval immediately preceding \(d\)
\revision{among all points of \(H_x\). Conditional on \(\mathcal F_x\), the unordered
collection of tagged-only gaps has the law of the coordinates of a
Dirichlet\((1,\ldots,1)\) vector: select a tagged point uniformly and rotate
it to zero; the other \(2K_x-1\) points remain independent uniforms, whose
ordered spacings have that law. Only quantities invariant under this
choice of origin will be used. For a nonnegative vector \(g\), write
\(R_m(g)\) for the sum of its \(m\) largest coordinates,}
with \(R_m(g)=R_{2K_x}(g)\) when \(m>2K_x\). At most \(b\) tagged-only intervals
contain a point of \(B_x\); hence
\[
        \sum_{d\in T_x}(g_d-\widehat g_d)
        \le R_b(g).
\]

Let \(\mathscr C_{\mathrm{mac}}\) be the cycles of the departure permutation
\(\chi\tau\) having size at least \(aK_x\), and set
\[
        W
        =
        \sum_{D\in\mathscr C_{\mathrm{mac}}}
        \sum_{d\in D}\widehat g_d .
\]
Since \((\chi\tau)^{-1}=\tau\chi^{-1}\), Proposition~\ref{prop:small},
applied to \(\chi^{-1}\), gives for every \(d\notin S_x\), almost surely,
\[
        \mathbb P\bigl(
        |C_{\chi\tau}(d)|\ge aK_x
        \,\bigm|\,\mathcal G_x\bigr)
        \ge p-\zeta_{K_x} .
\]
Consequently,
\begin{equation}\label{eq:W-preliminary}
\begin{aligned}
        \mathbb E[W\mid\mathcal G_x]
        &\ge
        (p-\zeta_{K_x})\sum_{d\notin S_x}g_d-R_b(g) \\
        &\ge
        (p-\zeta_{K_x})\bigl(1-R_{2(b+1)}(g)\bigr)-R_b(g).
\end{aligned}
\end{equation}

We next control the largest tagged-only gaps. Put \(n=2K_x\). If
\(g=(g_1,\ldots,g_n)\) has the Dirichlet\((1,\ldots,1)\) law and
\(m\le n\), then for \(1\le\lambda\le n/2\),
\[
        R_m(g)
        \le
        \frac{m\lambda}{n}
        +
        \sum_{i=1}^n g_i
        \mathbf 1_{\{g_i>\lambda/n\}} .
\]
The marginal tail is
\(\mathbb P(g_i>t)=(1-t)^{n-1}\). Integration of this tail gives
\[
\begin{aligned}
        \mathbb E R_m(g)
        &\le
        \frac{m\lambda}{n}
        +\lambda(1-\lambda/n)^{n-1}
        +(1-\lambda/n)^n  \\
        &\le
        \frac{m\lambda}{n}
        +(\lambda+1)e^{-\lambda/2}.
\end{aligned}
\]
It follows that there is a deterministic function
\(\omega(\alpha)\downarrow0\) as \(\alpha\downarrow0\) such that, uniformly
for large \(n\) and \(m/n\le\alpha\),
\[
        \mathbb E R_m(g)\le\omega(\alpha).
\]
Since
\[
        \frac b{2K_x}\le\frac{\eta}{1-\eta},
        \qquad
        \frac{2(b+1)}{2K_x}
        \le\frac{2\eta}{1-\eta}+\frac1{K_x},
\]
we may choose \(\eta_0\) small enough that, after taking the conditional
expectation in \eqref{eq:W-preliminary} over
\(\mathcal G_x\) given \(\mathcal F_x\),
\revision{the sum of the two expected error terms is at most
\(p/4\).  Since \(p-\zeta_{K_x}\ge7p/8\), this yields}
\begin{equation}\label{eq:W-mass}
        \mathbb E[W\mid\mathcal F_x]\ge\frac p2
\end{equation}
almost surely on \(G_x(\eta)\), for all sufficiently large \(K_x\).
\revision{The estimate \eqref{eq:W-mass} is uniform in the conditioning data
in \(\mathcal F_x\).}

\revision{We next use these departure cycles to bound the total time spent
on tagged leaves.} For
\(D\in\mathscr C_{\mathrm{mac}}\), let \(L(D)\) be the number of tagged clean
double leaves whose endpoint pair meets \(D\), and let
\[
        \widehat G(D)=\sum_{d\in D}\widehat g_d .
\]
\revision{The departure permutation \(\chi\tau\) is conjugate by \(\tau\) to
the return permutation \(\tau\chi\). Thus, if the uniform time \(U\) at \(x\)
lies in an interval counted by \(\widehat G(D)\), the loop's successive tagged
return endpoints follow the cycle corresponding to \(D\).} Since one leaf
contributes at most two endpoints to \(D\),
\[
        L(D)\ge\frac{|D|}{2}\ge\frac{aK_x}{2}.
\]

Fix \(0<\varepsilon<1/2\). For a tagged leaf \(y\), its two cross times on its
parent edge \(\{x,y\}\) delimit two complementary intervals of the time circle at \(y\).
Call \(y\) unbalanced if either interval has Lebesgue length less than \(\varepsilon\). Let \(\mathcal U\)
be the total number of such leaves, and let \(U(D)\) count those whose endpoint pair meets \(D\).
\revision{A balanced leaf met by \(D\) contributes at least \(\varepsilon\)
to the total time spent on tagged leaves.}
Hence, for every realisation of the link configuration and \(\tau\),
\[
\begin{aligned}
        \mathbb E_U
        \sum_{y\ \mathrm{tagged}}\ell_y(\gamma_x^U)
        &\ge
        \varepsilon
        \sum_{D\in\mathscr C_{\mathrm{mac}}}
        \widehat G(D)\bigl(L(D)-U(D)\bigr) \\
        &\ge
        \frac{\varepsilon aK_x}{2}W-2\varepsilon\mathcal U .
\end{aligned}
\]
The last inequality uses \(\widehat G(D)\le1\) and the fact that one
unbalanced pair meets at most two departure cycles.

Conditional on \(\mathcal F_x\), the two cross times belonging to each tagged
child are independent uniform points. Therefore
\[
        \mathbb E[\mathcal U\mid\mathcal F_x]
        =2\varepsilon K_x.
\]
Combining \(\mathbb E[\mathcal U\mid\mathcal F_x]=2\varepsilon K_x\) with
\eqref{eq:W-mass} gives, for all sufficiently large \(K_x\),
\[
        \widehat{\E}_{T,\beta}\Bigl[
        \sum_{y\ \mathrm{tagged}}\ell_y(\gamma_x^U)
        \,\Bigm|\,\mathcal F_x\Bigr]
        \ge
        \left(\frac{\varepsilon ap}{4}-4\varepsilon^2\right)K_x .
\]
Choose \(\varepsilon>0\) small enough that the coefficient is positive and
call a smaller positive constant \(c_0\). After increasing a deterministic
cutoff \(K_0\), define \(r_k=0\) for \(k\ge K_0\) and \(r_k=c_0k\) for
the finitely many \(k<K_0\). Then \(0\le r_k\le c_0k\) and
\(r_k/k\to0\); with this choice the asserted bound follows for every \(K_x\),
which proves the almost-sure conditional bound.
\end{proof}

\subsection{Galton--Watson implication}
\label{subsec:gwimp}

\revision{\begin{lemma}
\label{lem:aqbranch}
Let \(T\) be a locally finite rooted tree with root \(o\), and for a child
\(y\) of \(o\) let \(T_y\) be the rooted descendant subtree at \(y\).
For \(\beta>0\), set
\[
        B_\beta^{\mathrm{loop}}(T)
        =
        \{\P_{T,\beta}(|L_o|=\infty)>0\}.
\]
Then \(B_\beta^{\mathrm{loop}}(T)\) holds if and only if
\(B_\beta^{\mathrm{loop}}(T_y)\) holds for at least one child \(y\) of \(o\).
\end{lemma}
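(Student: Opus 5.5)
We prove the two implications separately. Both rest on a single observation. Condition on the Poisson configuration \(\Theta_{\{o,y\}}\) on the root edge to \(y\). Given it, the configurations on \(E_y^\downarrow\) have law \(\P_{T_y,\beta}\) and are independent of everything else. By the tree structure, the loop through \((o,0)\) has infinite vertex support if and only if one of its excursions into some \(T_y\) is infinite. Such an excursion is entered by crossing a link on \(\{o,y\}\) and is closed up within \(T_y\) before it crosses back.

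\emph{The direction \(B_\beta^{\mathrm{loop}}(T)\Rightarrow\) some \(B_\beta^{\mathrm{loop}}(T_y)\).} Suppose \(\P_{T_y,\beta}(|L_y|=\infty)=0\) for every child \(y\). Root invariance of zero-probability events then gives that, for every fixed time \(s\), the loop through \((y,s)\) in \(T_y\) is finite almost surely. A short way to see this: under \(\P_{T_y,\beta}\) the configuration is invariant under time rotation, so the loop through \((y,s)\) is a.s. finite for every \(s\). By Fubini it is then finite for Lebesgue-a.e. \(s\), and hence every loop of \(T_y\) meeting the circle at \(y\) is finite, because each such loop contains an interval of positive length. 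The crosses on \(\{o,y\}\) are independent of the configuration on \(T_y\), so the loops of \(T_y\) through the cross times are almost surely finite. The root loop visits finitely many incident edges. At each crossing into \(T_y\) it follows a loop of \(T_y\) restricted to the circle at \(y\), modified by the finitely many crosses on \(\{o,y\}\), and such a finite loop leads back to \(y\)'s circle. Hence every excursion is finite, and finitely many pieces glue into a finite loop.

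\emph{The direction some \(B_\beta^{\mathrm{loop}}(T_y)\Rightarrow B_\beta^{\mathrm{loop}}(T)\).} By time-rotation invariance and Fubini, with positive probability there is an infinite loop of \(T_y\) through a positive-measure set of times at \(y\). This gives an interval \(J\subset[0,1]\) on which the loop of \(T_y\) through \((y,s)\) is infinite. Now take the event that \(\{o,y\}\) carries exactly one cross, at a time in \(J\), and that all other edges at \(o\) carry no crosses. This event has positive probability and is independent of \(E_y^\downarrow\). On it, the root loop runs to that cross, enters \(y\) at a time in \(J\), and follows the infinite \(T_y\)-loop, which never returns. The one-cross edge works here by the bijectivity argument of Lemma~\ref{lem:addunilinks}.

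\emph{Main obstacle.} The delicate step is the measure-theoretic passage from "the loop through \((y,0)\) is infinite" to "the loops through the cross times are infinite". We handle it with rotation invariance of the Poisson law plus Fubini, and then use independence of the root-edge crosses.
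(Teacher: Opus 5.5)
Your proof is correct. The converse direction matches the paper's: on $\{|I_y|>0\}$ you force a single root cross, on $\{o,y\}$, at a time in the infinite-loop set. You then use that the forward orbit of an infinite $T_y$-loop never returns to its starting interval. One wording fix there: the event is not independent of $E_y^\downarrow$, since $J$ is determined by that configuration. What you actually use is that, conditionally on the $E_y^\downarrow$-configuration, the event has probability $\beta e^{-\beta\deg(o)}|J|>0$.

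For the forward direction you take a genuinely different route.
\begin{itemize}
\item The paper argues directly. An infinite orbit visits each of the finitely many root intervals at most once, so it has a last root interval. Its terminal cross, at time $s$ on some $\{o,y\}$, is followed by a forward tail that never meets $\{o,y\}$ again. That tail therefore survives deletion of all root-edge crosses, so $s\in I_y$. The root-edge Poisson points have then hit $I_y$, which has conditional probability zero if every $|I_y|=0$.
\item You argue by contraposition. $\E|I_y|=0$, together with the fact that $I_y$ is a union of positive-length directed intervals, forces $I_y=\varnothing$ almost surely. Hence every $T_y$-loop meeting the circle of $y$ is finite. Each excursion of the root loop then runs along a finite $T_y$-cycle through its entry time. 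It must meet a cross of $\{o,y\}$ no later than its return to that entry time. Since the root circle carries finitely many intervals, the root orbit closes into a finite cycle.
\end{itemize}

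What each buys: once $I_y=\varnothing$ is known, your version is a purely deterministic gluing argument. So the independence of the root-edge crosses that you invoke is not actually needed. The paper's version is more informative, since it exhibits the specific child and cross time through which an infinite root loop escapes.

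When you write it up, replace ``such a finite loop leads back to $y$'s circle'' by the precise statement above. The excursion ends at a cross of $\{o,y\}$, reached at the latest when the finite $T_y$-cycle returns to the entry time. Merely returning to the circle of $y$ does not end it.
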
}

\begin{proof}
For a fixed rooted tree \(S\), write
\[
        p_S(\beta)
        =
        \P_{S,\beta}(|L_o|=\infty).
\]
Thus \(p_S(\beta)\) is a quenched probability. We prove both implications at a
\revision{fixed parameter \(\beta>0\).}

First suppose that \(p_T(\beta)>0\). For a child \(y\), let \(I_y\) be the set
of times \(s\in[0,1]\) for which the directed loop through \((y,s)\) in the
isolated subtree \(T_y\) is infinite. By circular time-shift invariance,
\[
        p_{T_y}(\beta)
        =
        \E_{T_y,\beta}|I_y| .
\]
The root has finitely many incident edges and, at fixed \(\beta\), finitely
many incident crosses almost surely. Hence the root circle is split into
finitely many directed intervals. A directed orbit with infinite vertex support
cannot visit the same directed interval twice, because a repeated interval
would make the successor orbit periodic and therefore supported on finitely many
vertices. Thus an infinite loop can visit the root circle only finitely many
times. On the event that the root loop in \(T\) is infinite, let \(a\) be the
last root-circle interval in the forward orbit from \((o,0)\). The terminal
endpoint of \(a\) is a cross on some root-child edge \(\{o,y\}\), say at time
\(s\), and after crossing it the orbit never returns to the root.

\revision{Passing to the isolated subtree \(T_y\) deletes every cross
on the root-child edge, not only the cross at time \(s\).  The forward tail
after the crossing at \(s\) encounters no further endpoint of that edge on the
\(y\)-circle, since such an encounter would send the orbit back to the root.
Deleting all root-child crosses therefore leaves this forward tail unchanged;
it can only enlarge its first interval backwards across \(s\).  The isolated
loop through \((y,s)\) consequently contains the same infinite forward tail,
and hence \(s\in I_y\). Thus the root-child Poisson process has}
hit \(I_y\) for some child \(y\). Conditional on the child-subtree
configurations, the root-child crossing times are independent absolutely
continuous Poisson points. If \(|I_y|=0\) for every child, this hit event has
conditional probability zero. Since \(p_T(\beta)>0\) and the root has only
finitely many children, there is a child \(y\) with
\(\P_{T_y,\beta}(|I_y|>0)>0\), hence \(p_{T_y}(\beta)>0\). Therefore
\revision{\(B_\beta^{\mathrm{loop}}(T_y)\) holds.}

Conversely suppose that \(p_{T_y}(\beta)>0\) for some child \(y\). For this child,
let \(I_y\) be the set of times giving an infinite isolated \(T_y\)-loop. Then
\(\E_{T_y,\beta}|I_y|=p_{T_y}(\beta)>0\). On \(\{|I_y|>0\}\), require at the root that, among all
edges incident to \(o\), the only cross is on \(\{o,y\}\), and its time lies in
\(I_y\) but is not one of the finitely many child-subtree endpoint times on the
\revision{circle of \(y\). Its conditional probability is
\(\beta e^{-\beta\deg(o)}|I_y|\), which is positive whenever \(|I_y|>0\).
On this event, the loop from}
\((o,0)\) hits the unique cross to \(y\). Splitting an interval of an infinite
isolated \(T_y\)-loop at the crossing time cannot make the forward orbit from
the outgoing side finite; otherwise the original isolated directed orbit would
return to the same interval and be finite. Hence the full loop has infinite
vertex support. Thus \(p_T(\beta)>0\), so
\revision{\(B_\beta^{\mathrm{loop}}(T)\) holds.}
\end{proof}

\begin{proof}[Proof of Theorem~\ref{thm:gwzero}]
Fix \(0<\eta\le\eta_0\), where \(\eta_0\) is the constant from Lemma~\ref{lem:mass}.
\revision{Every subsequent restriction is an upper bound on
\(\beta\), so there is a deterministic \(\beta_0>0\) such that the argument
below applies to every \(0<\beta<\beta_0\). We first work}
under the unconditioned annealed law \(\P_\beta^{\mathrm{ann}}\).
For a fixed child \(y\) of a vertex \(x\), let \(P_y\) be the number of crosses
on \(\{x,y\}\), and set
\[
        q_\beta=(1+\beta)e^{-\beta}.
\]
The tagged-clean event has probability
\[
        a_\beta
        =
        e^{-\beta}\frac{\beta^2}{2} f(q_\beta).
\]
Write \(Z_x\) for the number of children of \(x\), and \(K_x\) for the number of its tagged clean double leaves.
Under \(\P_\beta^{\mathrm{ann}}\), \(K_x\mid Z_x\sim \operatorname{Bin}(Z_x,a_\beta)\).

\revision{Let \(t_\beta\) denote the expected number of tagged multi-link
endpoints at \(x\) contributed by one child. A child contributes two such
endpoints with probability \(a_\beta\), and zero otherwise. Thus
\[
        t_\beta:=2a_\beta=e^{-\beta}\beta^2 f(q_\beta),
\]}
while the expected total number of multi-link endpoints contributed by one
child is
\[
        \mu_\beta=\mathbb E[P\mathbf 1_{\{P\ge2\}}]
        =\beta(1-e^{-\beta}),
        \qquad P\sim \operatorname{Poisson}(\beta).
\]
Since \(f(q_\beta)\to1\) as \(\beta\downarrow0\), we have
\[
        t_\beta/\mu_\beta\to1 .
\]
\revision{Decrease \(\beta_0\), if necessary, so that
\(t_\beta/\mu_\beta>1-\eta/2\) for every
\(0<\beta<\beta_0\), and fix any such \(\beta\).}

At the root, let
\[
        K_o=\#\{\hbox{tagged clean double leaves of }o\},
        \qquad
        N_o=\sum_{y\text{ child of }o} P_y\mathbf 1_{\{P_y\ge2\}} .
\]
Thus \(N_o\) is the total number of child multi-link endpoints at \(o\), and
\(|B_o|=N_o-2K_o\). Conditional on \(Z_o=n\), the pairs
\[
        \bigl(2\mathbf 1_{\{y\text{ tagged}\}},
        P_y\mathbf 1_{\{P_y\ge2\}}\bigr)
\]
are i.i.d. with means \(t_\beta\) and \(\mu_\beta\). The law of large numbers
therefore gives
\[
        \frac{2K_o}{n}\to t_\beta,
        \qquad
        \frac{N_o}{n}\to\mu_\beta
\]
in \(\P_\beta^{\mathrm{ann}}(\,\cdot\mid Z_o=n)\)-probability as
\(n\to\infty\). Since \(t_\beta/\mu_\beta>1-\eta/2\), define
\(G_o(\eta)=\{N_o-2K_o\le\eta N_o\}\). This event has probability tending to one, and
\(K_o\ge a_\beta n/2\) holds with probability tending to one.

Let \(U\) be uniform on \([0,1]\), independently of the annealed tree-link
configuration, and write
\[
        \widehat{\P}_\beta^{\mathrm{ann}}
        =
        \P_\beta^{\mathrm{ann}}\otimes\operatorname{Leb}_{[0,1]},
        \qquad
        \widehat{\E}_\beta^{\mathrm{ann}}
        =
        \E_{\widehat{\P}_\beta^{\mathrm{ann}}}.
\]
Let \(D_\beta\) be the event that some multi-link endpoint at the root has no
finite first return. If
\(\P_\beta^{\mathrm{ann}}(D_\beta)>0\), disintegration over \(T\) gives a
set of positive \(\mathbf P_Z\)-measure on which
\(\P_{T,\beta}(D_\beta)>0\). On each such tree, Lemma~\ref{lem:finreturn} gives
positive probability that the multi-link loop from an independent uniform root time
has infinite support. Lemma~\ref{lem:addunilinks} then gives a full loop from the
same uniform root time with infinite support. Global circular time-shift invariance
of each quenched link law gives
\(\P_\beta^{\mathrm{ann}}(|L_o|=\infty)>0\).

Suppose instead that \(\P_\beta^{\mathrm{ann}}(D_\beta)=0\). Then all
first-return data at the root are finite almost surely, so
Lemma~\ref{lem:mass} applies on \(G_o(\eta)\), where tagged endpoints form at least
a \(1-\eta\) fraction of all root multi-link endpoints. \revision{On its
complement we use the lower bound zero.} The conditional bound in
Lemma~\ref{lem:mass} and the tower property therefore give
\begin{equation}\label{eq:root-mass-lower}
\begin{aligned}
&\widehat{\E}_\beta^{\mathrm{ann}}\Bigl[
        \sum_{y\ \mathrm{tagged}} \ell_y(\gamma_o^U)
        \,\Bigm|\, Z_o=n
        \Bigr] \\
&\qquad\ge
        \E_\beta^{\mathrm{ann}}\bigl[
        (c_0K_o-r_{K_o})\mathbf 1_{G_o(\eta)}
        \,\bigm|\, Z_o=n\bigr].
\end{aligned}
\end{equation}
The conditional law of large numbers gives
\(\P_\beta^{\mathrm{ann}}(G_o(\eta)\mid Z_o=n)\to1\) and
\(K_o\ge a_\beta n/2\) with conditional probability tending to one. Since
\(r_k=o(k)\), the right-hand side of \eqref{eq:root-mass-lower} is bounded below by
\(c_\beta n-o(n)\) for some \(c_\beta>0\). Thus
\[
        \widehat{\E}_\beta^{\mathrm{ann}}\Bigl[
        \sum_{y\ \mathrm{tagged}} \ell_y(\gamma_o^U)
        \,\Bigm|\, Z_o=n
        \Bigr]
        \ge c_\beta n-o(n)
\]
More explicitly, there are constants
\(c_\beta'>0\) and \(n_0<\infty\) such that the left-hand side of \eqref{eq:root-mass-lower} is
at least \(c_\beta' n\) for all \(n\ge n_0\), while it is always nonnegative.
The Poisson link configuration is invariant under a global circular time shift,
and the tagged-clean status and leaf-time sum are carried equivariantly by this
shift. Therefore the same conditional expectation is obtained with \(U\)
replaced by \(0\), i.e. with \(\gamma_o=\gamma_o^0\). Since
\(\mathbb E[Z\mathbf 1_{\{Z\ge n_0\}}]=\infty\),
\[
        \E_\beta^{\mathrm{ann}}
        \sum_{y\ \mathrm{tagged}}\ell_y(\gamma_o)=\infty .
\]

Conditional on \(y\) being a tagged clean double leaf, the expected number of
uni-link child edges below \(y\) is
\[
        b_\beta
        =
        \beta e^{-\beta}\frac{f'(q_\beta)}{f(q_\beta)}.
\]
\revision{To see this directly, let \(J_y\) be the number of one-link
child edges below \(y\).  Conditional on \(Z_y=m\), independence of the child
edge counts gives
\[
        \mathbb E\bigl[J_y; N_e\le1\text{ for every child edge }e
        \mid Z_y=m\bigr]
        =m\beta e^{-\beta}q_\beta^{m-1}.
\]
Averaging over \(Z_y\) gives the numerator
\(\beta e^{-\beta}f'(q_\beta)\), while the clean event has probability
\(f(q_\beta)\), proving the displayed formula.}
This is finite and positive. Indeed \(q_\beta<1\), so
\(\sup_{m\ge1} m q_\beta^{m-1}<\infty\) and hence \(f'(q_\beta)<\infty\);
also \(f(q_\beta)>0\), and \(f'(q_\beta)>0\) unless the offspring law is
concentrated at zero, which is incompatible with \(\mathbb E Z=\infty\). These
uni-link children and their cross times are independent of the multi-link loop,
conditional on the tagged-clean event. Given the set of times on
\(\{y\}\times[0,1]\) visited by \(\gamma_o\), a uni-link child of \(y\)
contributes an outward one-link exit exactly when its unique cross time falls
in this visited set. Its conditional probability of doing so is therefore
\revision{\(\ell_y(\gamma_o)\).
The restricted multi-link configuration deletes all
zero- and one-link child edges of \(y\), so neither their number nor their
uniform cross times enter the construction of \(\gamma_o\).  Since all
summands below are nonnegative, Tonelli's theorem applies even though the
resulting expectation is infinite.
Hence}
\[
        \E_\beta^{\mathrm{ann}}[C^{(1)}(o)]
        \ge
        b_\beta\,
        \E_\beta^{\mathrm{ann}}
        \sum_{y\ \mathrm{tagged}}\ell_y(\gamma_o)
        =
        \infty .
\]
By Lemma~\ref{lem:multi}, \(\P_\beta^{\mathrm{ann}}(|L_o|=\infty)>0\). Thus,
in either alternative for \(D_\beta\), the root loop is infinite with positive annealed probability.
\revision{Since \(\beta\in(0,\beta_0)\) was arbitrary, the preceding
argument gives
\begin{equation}\label{eq:annealed-loop-positive}
        \P_\beta^{\mathrm{ann}}(|L_o|=\infty)>0
        \qquad\text{for every }\beta\in(0,\beta_0).
\end{equation}}

For link percolation, Bernoulli retention at any \(\beta>0\) preserves
infinite mean for the retained offspring distribution. The retained
Galton--Watson tree therefore survives with positive annealed probability, so
\begin{equation}\label{eq:annealed-link-positive}
        \P_\beta^{\mathrm{ann}}(o\leftrightarrow\infty)>0
        \qquad\text{for every }\beta>0.
\end{equation}

\revision{It remains only to pass from positive annealed probability to
the conditioned quenched statement.  Fix a deterministic
\(\beta\in(0,\beta_0)\) and, under the unconditioned tree law
\(\mathbf P_Z\), put
\[
        r_\beta
        =\mathbf P_Z\bigl(B_\beta^{\mathrm{loop}}\bigr).
\]
By Lemma~\ref{lem:aqbranch} and the Galton--Watson branching
property, conditional on \(Z\) the child subtrees are independent copies of
the unconditioned tree, and therefore
\[
        r_\beta=1-f(1-r_\beta).
\]
On the other hand, \eqref{eq:annealed-loop-positive} and
annealed disintegration give
\[
        0
        <\P_\beta^{\mathrm{ann}}(|L_o|=\infty)
        =\int \P_{T,\beta}(|L_o|=\infty)\,\mathbf P_Z(dT),
\]
so \(r_\beta>0\).  Let \(\xi\) be the extinction
probability, that is, the smallest solution of \(s=f(s)\) in \([0,1]\).
Because \(\mathbb E Z=\infty\), the offspring law gives positive mass to some
value at least two; hence \(f\) is strictly convex on \((0,1)\).  Together
with \(f'(1-)=\infty>1\), this implies that the only solutions of
\(s=f(s)\) in \([0,1]\) are \(\xi\) and \(1\).  Thus the only solutions of
\(r=1-f(1-r)\) are \(0\) and \(1-\xi\), and consequently
\(r_\beta=1-\xi\).}

\revision{On extinction the almost surely locally finite
Galton--Watson tree is finite, so \(B_\beta^{\mathrm{loop}}\) is contained in
the survival event.  Since both events have probability \(1-\xi\),
\[
        \mathbf P_Z\bigl(
        B_\beta^{\mathrm{loop}}\mid T\text{ survives}\bigr)=1.
\]
This proves the stronger fixed-parameter assertion in the
theorem for every fixed \(\beta\in(0,\beta_0)\).  Now choose a deterministic
sequence \(\beta_n\downarrow0\) inside \((0,\beta_0)\) and intersect the
corresponding probability-one events.  On that intersection,
\[
        \beta_c^{\mathrm{loop}}(T)=0.
\]
Every infinite loop is contained in an infinite link
component, so
\(0\le\beta_c^{\mathrm{link}}(T)\le\beta_c^{\mathrm{loop}}(T)\).
Consequently
\[
        \beta_c^{\mathrm{loop}}(T)
        =\beta_c^{\mathrm{link}}(T)=0
\]
for
\(\mathbf P_Z(\,\cdot\mid T\text{ survives})\)-almost every \(T\), which}
proves Theorem~\ref{thm:gwzero}.
\end{proof}

\subsection{Proof of the finite boundary-pairing estimate}
\label{subsec:finglu}

We prove Proposition~\ref{prop:small} directly from the peeling
description of a uniform boundary pairing.  This avoids passing through
a permutation with independently sampled transpositions.

Although the argument is permutation-theoretic, we use the polygon-gluing
terminology behind the peeling exploration of Budzinski, Curien, and Petri
\cite[Sections~3.1--3.2]{budzinski_curien_petri_2019_universality}.
\revision{Represent each permutation cycle as an oriented boundary component, or
\emph{hole}, its elements as the still-unglued \emph{sides} in cyclic order,
and \(\tau\) as a pairing of those sides. Revealing a pair glues its two
sides, removes them from the unpaired boundary, and splices the remaining
boundary orders; in particular, \(1\)- and \(2\)-cycles are holes of perimeter
one and two.}

For a permutation \(\pi\) of \(H\), put
\[
\begin{aligned}
        N&:=|H|=2K,\\
        S(\pi)&:=\{x\in H:\text{$x$ belongs to a \(1\)- or \(2\)-cycle
        of \(\pi\)}\}.
\end{aligned}
\]
With \(\delta:=\delta_0/2\), the hypothesis of
Proposition~\ref{prop:small} is \(|S(\chi)|\leq\delta N\).
All error terms below are uniform over the deterministic permutation
\(\chi\) and over the choice of \(j\notin S(\chi)\).

We next define the exploration and all of its terminology.  After
\(i\) matching pairs have been exposed, let \(H_i\) be the set of
unpaired sides and let \(\chi_i\) be the permutation whose cycles list
those sides in their current boundary order.  Thus
\[
        H_0=H,\qquad \chi_0=\chi,\qquad |H_i|=N-2i.
\]
A cycle of \(\chi_i\) is a \emph{boundary cycle}, also called a
\emph{hole}, and its number of sides is its \emph{perimeter}.  A
\emph{boundary corner} is an occurrence between two consecutive sides
of a boundary cycle.  We write it as an ordered pair
\[
        (\ell,r),\qquad r=\chi_i(\ell),
\]
and call \(\ell\) and \(r\) respectively its \emph{left side} and
\emph{right side}.

Figure~\ref{fig:boundary-splice} depicts the generic different-component
case; the algebraic update below also covers degeneracies.

\begin{wrapfigure}[11]{r}{0.39\textwidth}
\centering
\begin{tikzpicture}[
  x=0.8cm,
  y=0.7cm,
  >={Stealth[length=3.5pt,width=3pt]},
  every node/.style={font=\footnotesize},
  boundary/.style={gray!70,line width=0.7pt},
  paired/.style={coralc,line width=1.8pt},
  pairing/.style={coralc,dashed,line width=0.8pt},
  orientation/.style={gray!70,->,line width=0.8pt},
  process/.style={->,line width=0.8pt}
]
  % Two boundary cycles before exposure.
  \coordinate (A0) at (0,0.75);
  \coordinate (A1) at (1.2,0.75);
  \coordinate (A2) at (1.2,1.95);
  \coordinate (A3) at (0,1.95);
  \draw[boundary] (A0)--(A1)--(A2)--(A3)--cycle;
  \draw[paired] (A1)--(A2);
  \node[above right=4pt,inner sep=0pt] at (1.2,1.35) {\(\ell\)};
  \draw[orientation] (0.85,1.35)
    arc[start angle=0,end angle=300,radius=0.25];
  \fill (A2) circle (1.7pt);
  \node[above right=2pt] at (A2) {\(\star\)};

  \coordinate (B0) at (0,-1.35);
  \coordinate (B1) at (1.2,-1.35);
  \coordinate (B2) at (1.2,-0.15);
  \coordinate (B3) at (0,-0.15);
  \draw[boundary] (B0)--(B1)--(B2)--(B3)--cycle;
  \draw[paired] (B1)--(B2);
  \node[below right=4pt,inner sep=0pt] at (1.2,-0.75) {\(z\)};
  \draw[orientation] (0.85,-0.75)
    arc[start angle=0,end angle=300,radius=0.25];

  \draw[pairing]
    (1.2,1.35) .. controls (2.0,1.05) and (2.0,-0.45) ..
    (1.2,-0.75);
  \draw[process] (2.7,0.3)--(3.55,0.3)
    node[midway,above] {glue};

  % The merged boundary cycle after deleting the paired sides.
  \coordinate (C0) at (5.0,-1.25);
  \coordinate (C1) at (6.15,-0.55);
  \coordinate (C2) at (6.15,0.85);
  \coordinate (C3) at (5.0,1.55);
  \coordinate (C4) at (3.85,0.85);
  \coordinate (C5) at (3.85,-0.55);
  \draw[boundary] (C0)--(C1)--(C2)--(C3)--(C4)--(C5)--cycle;
  \draw[orientation] (5.25,0.15)
    arc[start angle=0,end angle=300,radius=0.25];
\end{tikzpicture}
\captionsetup{font=footnotesize,skip=2pt}
\caption{\revision{The thick orange sides \(\ell\) and \(z=\tau(\ell)\) are glued and
  disappear from the boundary; the right-hand cycle consists of the
  remaining sides.}}
\label{fig:boundary-splice}
\end{wrapfigure}

Algorithm~R of Budzinski, Curien, and Petri
\revision{\cite[Section~5]{budzinski_curien_petri_2019_universality}}
distinguishes one point of the boundary.  In the boundary-permutation
encoding we represent that point by a distinguished boundary corner,
called the \emph{red corner}; while following its boundary cycle we
also call it the \emph{active corner}.  In our initial state it is the
prescribed corner
\[
        (\ell_0,r_0):=(j,\chi(j)).
\]
It is useful to encode the red corner by a formal marker
\(\star\notin H_i\).  Whenever a red corner
\((\ell_i,r_i)\) is present, let \(\widehat\chi_i\) be the permutation
of \(H_i\cup\{\star\}\) obtained from \(\chi_i\) by replacing
\[
        \ell_i\longrightarrow r_i
        \quad\text{with}\quad
        \ell_i\longrightarrow\star\longrightarrow r_i.
\]
Thus \(\ell_i=\widehat\chi_i^{-1}(\star)\) and
\(r_i=\widehat\chi_i(\star)\).

At exposure \(i\), with \(i\) pairs already known, the algorithm
\emph{peels} the left side \(\ell_i\): it reveals its matching partner
\[
        z_i:=\tau(\ell_i).
\]
We also say that \(\ell_i\) is \emph{inspected} and that the pair
\(\{\ell_i,z_i\}\) is \emph{exposed}.
The other unpaired sides are the \emph{eligible partners}, and their
number is
\[
        D_i:=|H_i|-1=N-2i-1.
\]
Here is the boundary update, which we call the \emph{splice}.  If
\(\pi\) is a permutation of a finite set and \(A\) is a subset of that
set, write \(\operatorname{Del}_A(\pi)\) for the permutation obtained
by erasing the elements of \(A\) from the cycle notation of \(\pi\);
equivalently, the new successor of a retained point is the first
retained point encountered by iterating \(\pi\).  With permutations
composed from right to left, the exposed pair is spliced out by
\[
        H_{i+1}:=H_i\setminus\{\ell_i,z_i\},
        \qquad
        \chi_{i+1}
        :=\operatorname{Del}_{\{\ell_i,z_i\}}
          \bigl(\chi_i\circ(\ell_i\,z_i)\bigr).
 \tag{18}\label{eq:direct-splice}
\]
The red mark is updated by the same operation:
\[
        \widehat\chi_{i+1}^{\,\mathrm{pre}}
        :=\operatorname{Del}_{\{\ell_i,z_i\}}
          \bigl(\widehat\chi_i\circ(\ell_i\,z_i)\bigr).
\]
If the cycle of \(\star\) in
\(\widehat\chi_{i+1}^{\,\mathrm{pre}}\) contains a side of \(H_{i+1}\),
then \(\star\) marks a corner of the new boundary.  We take this corner
as the canonical continuation of the current red corner and define its
left and right sides by
\[
        \ell_{i+1}
          :=(\widehat\chi_{i+1}^{\,\mathrm{pre}})^{-1}(\star),
        \qquad
        r_{i+1}
          :=\widehat\chi_{i+1}^{\,\mathrm{pre}}(\star).
\]
If \(\star\) is instead a \(1\)-cycle, there is no canonical continuation
on the new boundary.  Deleting \(\star\) in either case gives
\(\chi_{i+1}\).  This convention covers every degenerate case.

For example, put
\[
        u_i:=\chi_i^{-1}(\ell_i),\qquad
        p_i:=\chi_i^{-1}(z_i),\qquad
        q_i:=\chi_i(z_i).
\]
If the six sides below are distinct, the two three-side segments in
boundary order are
\[
        u_i\longrightarrow\ell_i\longrightarrow r_i,
        \qquad
        p_i\longrightarrow z_i\longrightarrow q_i,
\]
then the splice removes \(\ell_i,z_i\) and replaces these segments by
\[
        u_i\longrightarrow q_i,
        \qquad
        p_i\longrightarrow r_i.
 \tag{19}\label{eq:direct-two-arc}
\]
A label repeats in \eqref{eq:direct-two-arc} only on a \(1\)- or
\(2\)-cycle or when the two segments overlap;
\eqref{eq:direct-splice} covers both cases.

The boundary cycle incident to the red corner is the
\emph{active boundary cycle}, and its perimeter before exposure \(i\)
is denoted by \(P_i\).  The splice and marker rule have the following
possible outcomes.
\begin{itemize}
\item If \(z_i\) lies on a different boundary cycle of perimeter
  \(s\), the two cycles merge into one of perimeter \(P_i+s-2\).
  The canonical continuation exists except when \(P_i=s=1\).
\item If \(z_i\) lies on the active boundary cycle, that cycle splits
  into two cycles whose perimeters sum to \(P_i-2\), with zero
  perimeters suppressed.  If the canonical continuation exists, its
  cycle is the new active boundary cycle.  For each possible new active
  perimeter, at most two choices of \(z_i\) produce that perimeter.
\end{itemize}
Pairing \(\ell_i\) to \(r_i\), the side immediately to the right of
the red corner, is a \emph{strong closure}; there is no canonical
continuation.
Pairing \(\ell_i\) to
\(u_i=\chi_i^{-1}(\ell_i)\ne r_i\), the adjacent side on its other
side, is a \emph{weak closure}; the canonical continuation exists.
When
\(P_i=1\) and \(z_i\) is the sole side of a
different boundary \(1\)-cycle, both cycles are removed from the
unpaired boundary, so there is no canonical continuation.  This is a
\emph{loop closure}.  Strong and loop closures are the only two cases
without a canonical continuation.

Whenever there is no canonical continuation and unpaired sides remain,
Algorithm~R resamples a boundary corner uniformly, independently of the
unexplored matching.  We prescribe the first corner as \((j,\chi(j))\);
choosing it uniformly and conditioning on its value would be equivalent.
We retain its canonical continuation until it ceases to exist and call
this segment the \emph{initial exploration}.  Its inspected sides belong
to \(C_{\tau\chi}(j)\), as Lemma~\ref{lem:direct-orbit} proves.
\revision{Resampling after every exposure would no longer ensure that all
inspected sides belong to the same product cycle.} It would still be \emph{Markovian}---its choices would use
only the exposed state and auxiliary randomness independent of the
unexplored matching---so the BCP loop/bigon estimates would remain valid.
After the initial exploration we continue only for estimates not tied to \(j\).

Let \(\mathcal F_i\) record the first \(i\) partner revelations and every
independent uniform replacement corner chosen after an exposure \(k<i\).
Thus \(H_i\), \(\chi_i\), and the red corner at time \(i\) are
\revision{\(\mathcal F_i\)-measurable, whereas \(z_i\) is not.  Proposition~11 of}
\cite{budzinski_curien_petri_2019_universality} says that, conditionally
on \(\mathcal F_i\), the partner \(z_i\) is uniform among the \(D_i\)
eligible sides.

Finally, let \(L_i\) and \(B_i\) be the respective numbers of boundary
\(1\)- and \(2\)-cycles after \(i\) exposures, and put
\[
        R_i:=L_i+2B_i.
\]
Thus \(R_i\) is the total \emph{short-boundary mass} at time \(i\): the
number of unpaired sides in all boundary \(1\)- and \(2\)-cycles.  We call
\(i\) an \emph{active short time} when the particular active cycle has
\(P_i\leq2\); its indicator is \(\1_{\{P_i\leq2\}}\).  All of
\(P_i,L_i,B_i,R_i\) are known before \(z_i\) is revealed.

\begin{lemma}
\label{lem:direct-orbit}
Fix \(\chi\), a perfect matching \(\tau\), and a starting side \(j\).
During the initial exploration, the left sides inspected
by Algorithm~R are distinct elements of the cycle
\(C_{\tau\chi}(j)\).  If the initial exploration ends by a strong
closure at exposure \(m\), then
\[
        |C_{\tau\chi}(j)|\geq m+1 .
\]
The only way in which this exploration can end without a strong closure
is a loop closure of the initial red corner.
\end{lemma}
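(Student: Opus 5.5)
Fix \(\chi\), \(\tau\) and \(j\). The plan is to prove by induction on the exposure index an invariant linking the red corner to the orbit of \(j\) under \(\tau\chi\). Namely: at exposure \(i\) of the initial exploration, the left side \(\ell_i\) of the red corner lies in \(C_{\tau\chi}(j)\). More precisely, \(\ell_i=(\tau\chi)^{k_i}(j)\) for some \(k_i\ge i\). The full orbit segment \(j,\tau\chi(j),\dots\) up to \(\ell_i\) consists of sides that are either already inspected or already removed as partners.

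The base case is \(\ell_0=j\). For the inductive step, peel \(\ell_i\), reveal \(z_i=\tau(\ell_i)\), and compute the new left side \(\ell_{i+1}=(\widehat\chi_{i+1}^{\,\mathrm{pre}})^{-1}(\star)\) from \eqref{eq:direct-splice}. In the generic configuration \eqref{eq:direct-two-arc}, the marker \(\star\) sits between \(\ell_i\) and \(r_i\). After composing with \((\ell_i\,z_i)\), the predecessor of \(\star\) becomes \(p_i=\chi_i^{-1}(z_i)\), provided \(p_i\ne\ell_i\). One must then relate \(\chi_i\) back to \(\chi\): each splice replaces the original successor by the first retained point reached after passing through exposed pairs. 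Hence \(\chi_i^{-1}(z)\) is obtained from \(\chi^{-1}(z)\) by repeatedly applying \(\tau\chi^{-1}\) across exposed sides. It is cleaner to phrase the invariant through departures: \(\tau\chi\) read backwards means \(\chi^{-1}\tau\). The claim to verify is that \(\ell_{i+1}\) is obtained from \(\ell_i\) by applying \(\chi^{-1}\tau\), or by a bounded iterate of it that only passes through already-exposed pairs. This identifies \(\ell_{i+1}\) as an element of the same cycle of \(\chi^{-1}\tau\), and that cycle is conjugate to \(C_{\tau\chi}(j)\). If needed, I would orient the convention (left versus right side) so that the orbit is literally \(C_{\tau\chi}(j)\), matching how \(\tau\chi\) is used in Lemma~\ref{lem:mass}.

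Distinctness of the inspected sides is immediate, since each inspected side is removed from \(H_{i+1}\). Distinct elements of a single cycle then give \(|C_{\tau\chi}(j)|\ge\) the number of inspected sides. For the strong-closure bound, note that at exposure \(m\) the side \(\ell_m\) is paired with \(r_m\). By the invariant, \(r_m\) is also on the orbit and has not been inspected; it is the side the orbit would reach next. This yields at least \(m+1\) inspected sides, and in fact one more orbit element, which is enough for \(|C_{\tau\chi}(j)|\ge m+1\).

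The final claim follows from the case list preceding the lemma: the only outcomes without canonical continuation are strong closure and loop closure. So the exploration can only end otherwise by a loop closure, which must be that of the current red corner.

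The main obstacle is the bookkeeping in the inductive step, namely showing that Del-splicing commutes with following \(\tau\chi\). The degenerate cases need care: \(z_i=u_i\) (weak closure), \(z_i=r_i\), and one- or two-cycles. I would first establish a general identity of the form \(\chi_i=\operatorname{Del}_{E_i}(\chi\circ\prod_{k<i}(\ell_k\,z_k))\), where \(E_i\) is the set of exposed sides. The invariant then becomes a statement about a single permutation with deletions, and this form covers all degeneracies uniformly.
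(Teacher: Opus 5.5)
Your core argument is the paper's. The generic splice makes the new left side \(p_i=\chi_i^{-1}\tau(\ell_i)\), so the inspected sides advance along the orbit of \((\tau\chi)^{-1}=\chi^{-1}\tau\) from \(j\), skipping exposed sides. Distinctness via \(\ell_i\notin H_{i+1}\), the count \(m+1\) from \(\ell_0,\dots,\ell_m\), and the final assertion from the case list are all fine.

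Your identity \(\chi_i=\operatorname{Del}_{E_i}(\chi\circ\prod_{k<i}(\ell_k\,z_k))\) is correct; it follows from \(\operatorname{Del}_A(\pi\circ(x\,y))=\operatorname{Del}_A(\pi)\circ(x\,y)\) for \(x,y\notin A\). Applied to \(\widehat\chi_i\), it gives two facts at once. First, \(\ell_i\) is the first unexposed side in \(j,\chi^{-1}\tau(j),(\chi^{-1}\tau)^2(j),\dots\). Second, \(\star\) becomes a \(1\)-cycle exactly when all of \(C_{\tau\chi}(j)\) is exposed. This is a cleaner, uniform replacement for the paper's case-by-case check of weak closures and short cycles.

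Two statements must be corrected, though.

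First, state the invariant for the backward orbit from the start; with forward iterates of \(\tau\chi\) it is false. For \(\chi=(1\,2\,3\,4)\), \(\tau=(1\,3)(2\,4)\), \(j=1\), one gets \(\ell_1=2=(\tau\chi)^{-1}(1)\), while the forward segment \(1,4,3,2\) contains the unexposed side \(4\). Likewise, the cycle of \(\chi^{-1}\tau\) through \(j\) is not merely conjugate to \(C_{\tau\chi}(j)\). It is equal to it as a set, which is what membership requires. The conventions of Algorithm~R are fixed, so no reorientation is available, and none is needed.

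Second, the claim that at a strong closure \(r_m\) lies on the orbit and supplies one more orbit element is false. A strong closure means the orbit of \(\ell_m\) returns to \(\ell_m\) through already exposed sides only. But \(r_m=\tau(\ell_m)\) is unexposed and different from \(\ell_m\), so \(r_m\notin C_{\tau\chi}(j)\) always. For example, take \(\chi=(1\,2\,3\,4)\), \(\tau=(1\,2)(3\,4)\), \(j=1\). Then \(\tau\chi(1)=1\), there is a strong closure at \(m=0\), and \(|C_{\tau\chi}(1)|=1=m+1\). Delete that sentence: the bound \(m+1\) is sharp and rests on the inspected sides alone.
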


\begin{proof}
Label the corner between \(s\) and \(\chi(s)\) by \(s\).
Orientation-reversing gluing of the sides \(s\) and \(\tau(s)\)
identifies the corner labelled \(s\) with the corner labelled
\(\chi^{-1}\tau(s)\).  Consequently two labelled corners become the
same vertex of the fully glued diagram exactly when their labels lie in
the same orbit of
\[
        \chi^{-1}\tau=(\tau\chi)^{-1}.
\]
A permutation and its inverse have the same orbits, differing only in
cyclic orientation.  Hence the corner labels identified at any one vertex
form exactly one orbit of \(\tau\chi\).

We now follow the active corner locally.  Suppose that the active
boundary contains the consecutive three-side segment
\[
        u\longrightarrow \ell\longrightarrow r
\]
at the red corner, and that the chosen partner \(z\) lies in the segment
\[
        p\longrightarrow z\longrightarrow q.
\]
After gluing \(\ell\) to \(z\) and deleting the paired sides, the two
segments are replaced by
\[
        u\longrightarrow q,
        \qquad
        p\longrightarrow r.
\]
In the non-overlapping case the active corner continues at the second
segment, with new left side
\[
        p=\chi_i^{-1}(z)=\chi_i^{-1}\tau(\ell),
\]
where \(\chi_i\) is the current boundary permutation.  This is the
next reverse-vertex step.  The splice deletes the exposed sides from
the current cyclic orders.  Hence, inductively, whenever the red corner
has a canonical continuation during the initial exploration, its new left
side \(\ell_{i+1}\) is the first still-unexposed element encountered after
\(\ell_i\) along the original \((\tau\chi)^{-1}\)-orbit.  If \(z=r\), then
\(\chi_i^{-1}\tau(\ell)=\ell\), so the move is precisely a strong
closure.

It remains to check the possible overlaps in this local description.
If \(z\) is the predecessor of \(\ell\), the adjacent weak splice gives
the same first-surviving reverse-orbit update.  If \(z\) is a boundary
\(1\)-cycle, the first reverse step lands at the deleted side \(z\);
the next surviving step continues the active corner unless the active
boundary is also a \(1\)-cycle, in which case the move is exactly a
loop closure of the initial red corner.  If \(z\) belongs to the
boundary \(2\)-cycle
\((z,w)\), the next surviving label is \(w\), the side inserted at the
active corner by the splice.  Finally, if the active boundary is a
\(2\)-cycle, pairing \(\ell\) to its other side is the strong closure,
and every external partner is covered by the preceding cases.  These
cases exhaust the degeneracies of the two-segment description.

It follows that, throughout the initial exploration, the
inspected sequence \(\ell_0=j,\ell_1,\ldots\) advances through the original
reverse orbit without repetition.  At a strong closure at exposure
\(m\), the \(m+1\) sides
\(\ell_0,\ldots,\ell_m\) are therefore distinct members of
\(C_{\tau\chi}(j)\).  The overlap analysis also proves the final
assertion.
\end{proof}

\begin{lemma}
\label{lem:direct-short}
Fix \(0<b<1/2\).  There are constants \(C_b,D_b<\infty\) such that, for
all sufficiently small fixed \(\delta>0\),
\[
 \mathbb P\left\{
   \max_{0\leq i\leq bN}R_i>D_b\delta N
 \right\}=o_N(1)
 \tag{20}\label{eq:direct-short-mass}
\]
and
\[
 \mathbb P\left\{
   \sum_{0\leq i<bN}\1_{\{P_i\leq2\}}
       >C_b\sqrt{\delta}\,N
 \right\}
 \leq C_b\sqrt{\delta}+o_N(1).
 \tag{21}\label{eq:direct-short-time}
\]
\end{lemma}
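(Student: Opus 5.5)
We will control the short-boundary mass \(R_i\) by a supermartingale-type drift argument, and then bound the active short times by comparing them with the exposures that create or consume short boundary. We use only the conditional uniformity of \(z_i\) among the \(D_i=N-2i-1\) eligible sides (Proposition~11 of \cite{budzinski_curien_petri_2019_universality}), together with \(R_0=|S(\chi)|\le\delta N\). For \(i\le bN\) with \(b<1/2\) we have \(D_i\ge(1-2b)N-1\), so every specific partner has conditional probability at most \(c_b/N\).

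\textbf{Step 1: one-step increments of \(R_i\).} A single splice changes \(R\) by a bounded amount, say \(|R_{i+1}-R_i|\le 6\). This holds because only the active cycle and the partner's cycle are modified, and each produces at most two new cycles. The increment is positive only in two ways.

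\begin{itemize}
\item \(z_i\) lies on the active cycle and splits off a piece of perimeter \(1\) or \(2\). For each new perimeter there are at most two such choices of \(z_i\), so this has probability at most \(8/D_i=O(1/N)\).
\item A merge produces perimeter \(\le2\). This requires \(P_i+s-2\le2\), so it can happen only if either the active cycle or the partner's cycle is already short.
\end{itemize}

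In the second case the net change in short mass is nonpositive whenever the partner's cycle was short. When only the active cycle is short, \(z_i\) must lie on a cycle of perimeter \(\le 4-P_i\), and the mass of such cycles is controlled by counting cycles of perimeter \(\le3\).

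It is cleaner to run the argument with \(R'_i\), the mass in cycles of perimeter at most \(3\) or \(4\), with all perimeters capped. Then \(\mathbb E[R'_{i+1}-R'_i\mid\mathcal F_i]\le C/N+C R'_i/N\). Gronwall over \(bN\) steps gives \(\mathbb E R'_i\le C_b(\delta N+1)\). Since the increments are bounded, Freedman/Azuma concentration of the martingale part (variance \(O(N)\)) gives \(\max_{i\le bN}R_i\le D_b\delta N\) with probability \(1-o_N(1)\), which is (20). Initial cycles of length \(3\) or \(4\) are a nuisance, since they are not controlled by \(\delta\). I would handle them by noting that producing a \(\le2\)-cycle from a perimeter-\(3\) or perimeter-\(4\) cycle requires the partner choice to fall in an \(O(1)\)-sized set. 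Each such event therefore costs \(O(1/N)\) per step, giving \(O(1)\) expected creations per unit mass. Alternatively, I would restrict to \(R_i\) directly and bound the creation rate as \(O(1/N)\) plus the probability that the active cycle is short.

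\textbf{Step 2: active short times.} This is the main obstacle, because the active cycle can be short repeatedly without \(R\) being large. Short active times arise in two ways. Either the active corner was resampled uniformly, which lands on short mass with probability \(R_i/(N-2i)\le C\delta\) on the good event of (20). Or the active cycle shrank to perimeter \(\le2\) by a split or closure, and each such step has conditional probability \(O(1/N)\) for a given target perimeter.

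Once the active cycle has perimeter \(\le2\), the next exposure ends the short episode within \(O(1)\) steps, except with probability \(O(R_i/N)\) that it merges into another short cycle. A short episode is therefore geometrically short. The expected number of episode starts up to time \(bN\) is at most \(C_b(\delta N+1)\) from resampling plus \(O(1)\) from splits. However, resamplings occur only after strong or loop closures, and the number of those is itself bounded by the number of short times plus the rare events \(z_i=r_i\) (each of probability \(1/D_i\)). So the process has the structure of a self-reinforcing loop. I would close it by a renewal comparison: each short episode triggers a resample, which restarts a short episode with probability \(\le C\delta\), and otherwise the active perimeter is at least \(3\). From perimeter at least \(3\), returning to \(P\le2\) requires a specific split, costing \(O(1/N)\) per step, or a sequence of \(O(1)\) weak steps.

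The crude bound \(\mathbb E\sum\mathbf 1_{\{P_i\le2\}}\le C_b\delta N+O(\sqrt N)\) would follow, and Markov's inequality then gives (21) in the weaker form \(C_b\sqrt\delta N\) with probability error \(C_b\sqrt\delta\). The \(\sqrt\delta\) scale leaves slack for the cases where long runs of weak closures slowly reduce a moderate perimeter; I expect handling that slow-erosion regime, by the BCP bigon/loop estimates for Markovian peeling, to be the delicate point.
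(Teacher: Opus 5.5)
Your proposal has a genuine gap: Step~1 does not prove \eqref{eq:direct-short-mass}, and Step~2 uses \eqref{eq:direct-short-mass} as input.

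\textbf{The drift bound in Step~1 fails.} The inequality \(\mathbb E[R'_{i+1}-R'_i\mid\mathcal F_i]\le C/N+CR'_i/N\) is false, both for \(R\) and for any capped variant.
\begin{itemize}
\item The hypothesis restricts only the \(1\)- and \(2\)-cycles of \(\chi\), so almost every side may lie on a \(3\)-cycle. On \(\{P_i=1\}\) the partner then lies on a \(3\)-cycle with conditional probability of order one. The active \(1\)-cycle merges with it into an active \(2\)-cycle, and \(R\) increases by one. So the drift is of order \(\1_{\{P_i=1\}}\), not \(O((1+R_i)/N)\).
\item Raising the cap to \(4\) only moves the failure to \(5\)-cycles: an active \(1\)-cycle and a \(5\)-cycle give a \(4\)-cycle. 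It also destroys \(R'_0\le\delta N\), as you note, so the Gronwall step fails at time zero.
\item Your first repair is wrong. A non-active \(3\)-cycle becomes a \(2\)-cycle whenever \(P_i=1\) and \(z_i\) lands anywhere on it, and such sides can number \(\Theta(N)\). The \(O(1/N)\) cost applies only to events where the active cycle shrinks by itself (splits, weak and strong closures).
\item Your second repair bounds the creation rate by \(\1_{\{P_i\le2\}}\). This makes \eqref{eq:direct-short-mass} depend on the number of active short times. Step~2 controls that number only on the event of \eqref{eq:direct-short-mass}, and through Markov only up to an error of order \(\sqrt\delta\). The bootstrap therefore cannot deliver the \(o_N(1)\) that \eqref{eq:direct-short-mass} asserts.
\item Your claim that a merge with a short partner never increases the short mass is also false. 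An active \(3\)-cycle absorbing a \(1\)-cycle gives \(R_{i+1}=R_i+1\).
\end{itemize}

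\textbf{The missing idea.} Every merge produces the new active cycle; the exception, a loop closure, deletes both cycles. So the short mass on \emph{non-active} cycles increases only when a split or strong closure leaves a short non-active remainder. That requires \(z_i\) in an \(O(1)\)-sized subset of the active cycle, so it has conditional probability \(O(1/N)\). Since the active cycle contributes at most \(2\), this gives \(\max_{i\le bN}R_i\le R_0+2+O_{\mathbb P}(1)\).

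\textbf{What the paper does instead.} It sets up no drift argument. For \eqref{eq:direct-short-mass} it quotes Proposition~13 of \cite{budzinski_curien_petri_2019_universality}: \(\sup_iL_i\), \(\sup_iB_i\) and \(\sum_i\1_{\{P_i=2\}}\) are stochastically dominated by \(L_0+B_0+X_N\) with \(X_N=O_{\mathbb P}(\log N)\). A union bound then gives \eqref{eq:direct-short-mass} with \(D_b=4\). For the times with \(P_i=1\) it uses the one-step inequality~(11) of the same paper, \(\mathbb E[\1_{\{P_i=1\}}+L_{i+1}-L_i\mid\mathcal F_i]\le 2/D_i+2B_i\1_{\{P_i=1\}}/D_i\). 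It telescopes the \(L_i\), absorbs the \(B_i\)-term on the event of \eqref{eq:direct-short-mass}, and finishes with Markov. No episode or renewal structure is needed.

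Once \eqref{eq:direct-short-mass} is available, your episode argument could perhaps be completed. As written, though, it leaves the resampling feedback loop and the erosion regime open. Its list of entries into short episodes also omits the merge of an active \(3\)-cycle with a \(1\)-cycle, whose conditional probability is \(L_i/D_i\) rather than \(O(1/N)\).
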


\begin{proof}
\revision{Proposition~13 of}
\cite{budzinski_curien_petri_2019_universality}, applied with
half-perimeter \(N/2\), gives the stochastic dominations
\[
 \sup_i L_i\preccurlyeq L_0+X_N,\qquad
 \sup_i B_i\preccurlyeq B_0+L_0+X_N,\qquad
 \sum_i\1_{\{P_i=2\}}\preccurlyeq B_0+L_0+X_N,
 \tag{22}\label{eq:direct-bcp-control}
\]
where \(\preccurlyeq\) denotes stochastic domination and
\(X_N=O_{\mathbb P}(\log N)\), uniformly in the initial configuration.
\revision{If simultaneously
\(\sup_iL_i\le L_0+t\) and
\(\sup_iB_i\le B_0+L_0+t\), then
\[
 \sup_iR_i
 \le (L_0+t)+2(B_0+L_0+t)
 =3L_0+2B_0+3t
 \le3\delta N+3t .
\]
The two separate stochastic dominations in
\eqref{eq:direct-bcp-control} and a union bound therefore give
\[
 \mathbb P\left\{
   \max_{0\le i\le bN}R_i>3\delta N+3t
 \right\}
 \le 2\,\mathbb P\{X_N>t\}.
\]
Taking \(t=\delta N/3\) and using
\(X_N=O_{\mathbb P}(\log N)\) proves
\eqref{eq:direct-short-mass}, for instance with \(D_b=4\).}

It remains to control the times at which the active boundary is a
\(1\)-cycle.  Equation~(11) of
\cite{budzinski_curien_petri_2019_universality}, translated from
half-perimeter \(N/2\), is
\[
 \mathbb E\left[
   \1_{\{P_i=1\}}+L_{i+1}-L_i
   \,\middle|\,\mathcal F_i
 \right]
 \leq \frac{2}{D_i}
      +\frac{2B_i\1_{\{P_i=1\}}}{D_i}.
 \tag{23}\label{eq:direct-one-cycle}
\]
\revision{Let \(m=\lceil bN\rceil\) and}
\(Y_1=\sum_{i<m}\1_{\{P_i=1\}}\).
Summing \eqref{eq:direct-one-cycle}, using \(D_i\geq(1-2b)N-1\),
and telescoping the \(L_i\)'s gives
\[
 \mathbb E Y_1
 \leq L_0+C_b+
 C_b\,\mathbb E\left[
       Y_1\,\frac{\max_{i<m}B_i}{N}
 \right].
 \tag{24}\label{eq:direct-absorb}
\]
Let
\[
        E_N:=\left\{\max_{0\leq i\leq bN}R_i\leq D_b\delta N\right\}.
\]
On \(E_N\), the last term is at most
\(C_bD_b\delta\,\mathbb E[Y_1;E_N]\).  On \(E_N^c\), the
\revision{deterministic bounds \(Y_1\leq\lceil bN\rceil\) and \(B_i\leq N/2\), together with}
\eqref{eq:direct-short-mass}, contribute \(o_N(N)\).  Choosing
\(\delta\) so that
\(C_bD_b\delta<1/2\), we may absorb the restricted term in
\eqref{eq:direct-absorb}; hence
\[
        \mathbb E Y_1\leq C_b\delta N+o_N(N).
\]
Markov's inequality yields
\[
        \mathbb P\{Y_1>C_b\sqrt{\delta}\,N\}
        \leq C_b\sqrt{\delta}+o_N(1).
\]
Combining this with the last estimate in
\eqref{eq:direct-bcp-control} proves
\eqref{eq:direct-short-time}.
\end{proof}

\begin{lemma}
\label{lem:direct-swallow}
\revision{Fix \(0<c<1/2\). There are constants \(C_c<\infty\) and}
\(\delta_c>0\) such that the following holds for
\revision{\(0<\delta\leq\delta_c\). Let \(\tau_{\mathrm{str}}\) and
\(\tau_{\mathrm{loop}}\) be the indices of the exposures causing the
respective strong and loop closures of the initial corner, with a missing
type set to infinity. A closure caused by \(z_i\) thus has index \(i\).
Then}
\[
 \mathbb P\{\tau_{\mathrm{loop}}<\tau_{\mathrm{str}},\
            \tau_{\mathrm{loop}}\leq cN\}
 \leq C_c\delta+o_N(1).
 \tag{25}\label{eq:direct-swallow}
\]
\end{lemma}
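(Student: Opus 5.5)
A loop closure of the initial red corner at exposure \(i\) needs two things at once. First, the active boundary cycle must have perimeter \(P_i=1\). Second, the revealed partner \(z_i\) must be the sole side of a \emph{different} boundary \(1\)-cycle. The plan is to bound the expected number of such events before time \(cN\) and then apply Markov's inequality.

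By Proposition~11 of \cite{budzinski_curien_petri_2019_universality}, conditionally on \(\mathcal F_i\) the partner \(z_i\) is uniform among the \(D_i\) eligible sides. Before time \(\min(\tau_{\mathrm{str}},\tau_{\mathrm{loop}})\) we are still in the initial exploration, so the red corner has a canonical continuation. Hence
\[
 \mathbb P\{\tau_{\mathrm{loop}}=i,\ \tau_{\mathrm{loop}}<\tau_{\mathrm{str}}\mid\mathcal F_i\}
 \le \1_{\{P_i=1\}}\frac{L_i}{D_i}.
\]
Summing over \(i\le cN\) and using \(D_i\ge(1-2c)N-1\) gives
\[
 \mathbb P\{\tau_{\mathrm{loop}}<\tau_{\mathrm{str}},\ \tau_{\mathrm{loop}}\le cN\}
 \le \frac{C}{N}\,\mathbb E\Bigl[\max_{i\le cN}L_i\sum_{i\le cN}\1_{\{P_i=1\}}\Bigr].
\]

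To bound the right-hand side I would apply Lemma~\ref{lem:direct-short} with \(b=c\), taking \(\delta_c\) small enough for that lemma, and intersect with the good event \(E_N=\{\max_{i\le cN}R_i\le D_c\delta N\}\). On \(E_N\) we have \(L_i\le R_i\le D_c\delta N\), so that part contributes at most
\[
 C D_c\delta\,\mathbb E Y_1,\qquad Y_1=\sum_{i<\lceil cN\rceil}\1_{\{P_i=1\}}.
\]
This is too large if we only use \(\mathbb E Y_1\le C\delta N\), since it gives \(\delta^2N\). We therefore need \(\mathbb E Y_1=O(1+\delta N)\) in a better form. The fix is to combine the two estimates more carefully. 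Summing \eqref{eq:direct-one-cycle} with the telescoping in \(L_i\) gives
\[
 \mathbb E Y_1\le L_0+C+C\,\mathbb E[Y_1\max_i B_i/N].
\]
This shows \(\mathbb E Y_1=O(L_0+1)+o(N)\), and \(L_0\le\delta N\) is genuinely linear. So instead of a product bound I would use a time-localised count. The key quantity is \(\sum_i\1_{\{P_i=1\}}L_i/D_i\), and a drift argument analogous to \eqref{eq:direct-one-cycle} bounds it directly. When \(P_i=1\), each exposure that pairs the active \(1\)-cycle with a different \(1\)-cycle decreases \(L\) by two; all other exposures at \(P_i=1\) consume the active \(1\)-cycle by merging it into a longer cycle, decreasing \(L\) by one. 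Hence
\[
 \sum_{i<cN}\1_{\{P_i=1\}}\bigl(1+\1_{\{\text{loop closure at }i\}}\bigr)
 \le L_0+\#\{\text{creations of }1\text{-cycles}\}.
\]
Taking expectations, the probability that the \emph{first} event at an active \(1\)-cycle is a loop closure is at most \(L_i/D_i\le D_c\delta N/D_i\le C_c\delta\) on \(E_N\). The number of initial-exploration excursions through active \(1\)-cycles before \(\tau_{\mathrm{str}}\) is not summed, because the loop closure ends the exploration: only the single step at \(\tau_{\mathrm{loop}}\) matters. Formally,
\[
 \mathbb P\{\tau_{\mathrm{loop}}<\tau_{\mathrm{str}},\ \tau_{\mathrm{loop}}\le cN\}
 \le\sum_{i\le cN}\mathbb E\Bigl[\1_{\{i\le\tau_{\mathrm{str}}\wedge\tau_{\mathrm{loop}},\,P_i=1\}}\frac{L_i}{D_i}\Bigr].
\]
We then need
\[
 \mathbb E\sum_{i\le\tau_{\mathrm{str}}\wedge\tau_{\mathrm{loop}}\wedge cN}\1_{\{P_i=1\}}=O(1),
\]
uniformly in \(\delta\) small, on \(E_N\).

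\textbf{Main obstacle.} The hard step is showing that the initial red corner spends only \(O(1)\) expected exposures on an active \(1\)-cycle before time \(cN\), and in particular that the \(L_0\) term does not enter. The initial corner sits on \(C_\chi(j)\), which has length at least three because \(j\notin S(\chi)\). An active \(1\)-cycle can then arise only through a splitting of the active cycle; by the at-most-two-choices rule this has probability at most \(2/D_i\) per step. Summing over \(i\le cN\) gives \(O(1)\) expected creations. Each such active \(1\)-cycle is left at the very next exposure, either by a loop closure or by merging into a longer cycle, so
\[
 \mathbb E\sum\1_{\{P_i=1\}}\le C_c.
\]
Finally, \(\mathbb P(E_N^c)=o_N(1)\) by \eqref{eq:direct-short-mass}, and together these give \eqref{eq:direct-swallow}.
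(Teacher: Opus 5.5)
Your reduction is the right one and matches the last step of the paper's proof. A loop closure at exposure \(i\) has conditional hazard at most \(\mathbf 1_{\{P_i=1\}}L_i/D_i\), which is \(O(\delta)\) while the short-boundary mass stays below \(D\delta N\). Everything therefore hinges on showing that the expected number of exposures \(i<\tau_{\mathrm{str}}\wedge\tau_{\mathrm{loop}}\wedge cN\) with \(P_i=1\) is \(O(1)\). The argument you give for that bound rests on two false claims.

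\emph{First claim.} An active \(1\)-cycle does not arise only by splitting. Merging the active cycle, of perimeter \(p\), with a different cycle of perimeter \(s\) gives perimeter \(p+s-2\). So \(P_i=2\) merged with a \(1\)-cycle gives \(P_{i+1}=1\), and every merge with a \(1\)-cycle lowers the active perimeter by one. Hence \(P_0\ge3\) offers no protection. These moves have conditional probability of order \(R_i/D_i\asymp\delta\), not \(O(1/N)\). Counted naively over \(cN\) steps they give \(O(\delta N)\) creations, which is exactly the linear term you were trying to avoid.

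\emph{Second claim.} An active \(1\)-cycle is not necessarily left at the next exposure. Merging it with a \(2\)-cycle gives perimeter \(1+2-2=1\) again. Merging it with a \(3\)-cycle gives perimeter \(2\), from which it can drop back to \(1\). Moreover, \(3\)-cycles are not controlled by \(R_i\) at all, so this second transition may be the typical one.

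What is missing is a proof that the active perimeter has enough upward drift to beat these \(O(\delta)\)-probability downward moves. The paper obtains this from the exponential Lyapunov function
\(W_i=r^{P_i}\mathbf 1_{\{i<\nu\}}\), where \(\nu=\tau_D\wedge(\tau_{\mathrm{str}}+1)\wedge(\tau_{\mathrm{loop}}+1)\) and \(\tau_D\) is the first time \(R_i>D\delta N\). The three kinds of move contribute as follows:
merges with non-short cycles raise \(P\) by at least one, giving a factor \(r\);
merges with short cycles have total probability at most \(D\delta N/D_i\) and lower \(P\) by at most one, giving a factor at most \(r^{-1}\);
splits of the active cycle contribute \(O(1/N)\), since at most two partners produce each perimeter.
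Fixing \(r<1\) and then taking \(\delta\) small yields \(\mathbb E[W_{i+1}\mid\mathcal F_i]\le\rho W_i+C_c/N\) with \(\rho<1\). This gives \(\sum_{i<m}\mathbb E W_i\le C_c\), and the \(O(1)\) occupation bound follows from \(\mathbf 1_{\{P_i=1,\,i<\nu\}}\le r^{-1}W_i\).

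Relatedly, using the global event \(E_N\) to bound \(L_i\) inside an adapted hazard sum is not legitimate, because \(E_N\) depends on the future. Stop at the stopping time \(\tau_D\) instead and add \(\mathbb P\{\tau_D\le cN\}=o_N(1)\) at the end, as the paper does.
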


\begin{proof}
\revision{Choose \(D=D_c\) as in \eqref{eq:direct-short-mass}, and put
\[
 \tau_D:=\inf\{i:R_i>D\delta N\},\qquad
 \nu:=\tau_D\wedge(\tau_{\mathrm{str}}+1)\wedge(\tau_{\mathrm{loop}}+1).
\]
The event \(\{i<\nu\}\) means that the short-boundary mass has not exceeded
its threshold by time \(i\), and no closure occurred at an exposure before \(i\).
It is \(\mathcal F_i\)-measurable, so \(\nu\) is a stopping time for this
filtration. The added ones retain the possibility of a closure at the
unrevealed exposure \(i\).
Set
\[
 m:=\lfloor cN\rfloor+1,\qquad
 V(p):=r^p,\qquad W_i:=V(P_i)\1_{\{i<\nu\}},
\]
where \(r\in(0,1)\) will be fixed below, and consider \(i<m\).}

\revision{Condition on \(\mathcal F_i\), on the event \(\{i<\nu\}\), and put \(p=P_i\).
For a partner choice \(z\), let \(W_{i+1}(z)\) be the resulting value,
set to zero if that exposure causes either closure or makes
\(R_{i+1}>D\delta N\). Uniformity gives
\(\mathbb E[W_{i+1}\mid\mathcal F_i]=D_i^{-1}\sum_z W_{i+1}(z)\).
If \(z\) lies on a different non-short cycle of perimeter \(s\geq3\),
then \(P_{i+1}=p+s-2\geq p+1\), so this class contributes at most
\(rV(p)\) to this conditional expectation. At most \(D\delta N\) choices lie on different
short cycles, each with \(W_{i+1}(z)\leq r^{-1}V(p)\). Among non-stopping
choices on the active boundary, at most two give each resulting perimeter
\(q\geq1\). These latter two classes contribute at most, respectively,
\[
 \frac{D\delta N}{D_i}\,r^{-1}V(p),\qquad
 \frac{2}{D_i}\sum_{q\geq1}r^q\leq\frac{C_c}{N}.
\]
Since \(D_i\geq(1-2c)N-1\), first fix \(r<1\), and then take \(\delta>0\)
sufficiently small. The three bounds give constants \(\rho<1\) and
\(C_c<\infty\) such that
\[
 \mathbb E[W_{i+1}\mid\mathcal F_i]
 \leq \rho W_i+\frac{C_c}{N}\1_{\{i<\nu\}}.
 \tag{26}\label{eq:direct-drift}
\]
On \(\{i\geq\nu\}\), both \(W_i\) and \(W_{i+1}\) vanish, so the inequality
holds there as well. Taking expectations and iterating gives
\[
 \mathbb E W_i
 \leq \rho^i\mathbb E W_0+\frac{C_c}{N}\sum_{k=0}^{i-1}\rho^k
 \leq \rho^i+\frac{C_c}{N(1-\rho)}.
\]
Consequently,
\[
 \sum_{i<m}\mathbb E W_i
 \leq \frac{1}{1-\rho}+\frac{C_cm}{N(1-\rho)}
 \leq C_c.
\]
Since \(\1_{\{P_i=1,\ i<\nu\}}\leq r^{-1}W_i\), this proves
\[
 \mathbb E\sum_{i<m}\1_{\{P_i=1,\ i<\nu\}}\leq C_c.
 \tag{27}\label{eq:direct-one-occupation}
\]}

\revision{At any such pre-exposure state, a loop closure requires a partner on
another \(1\)-cycle. Its conditional probability is at most
\(D\delta N/D_i\leq C_c\delta\). Summing these adapted hazards and using
\eqref{eq:direct-one-occupation} yields
\[
 \mathbb P\{\tau_{\mathrm{loop}}<\tau_{\mathrm{str}},\
             \tau_{\mathrm{loop}}\leq cN,\
             \tau_{\mathrm{loop}}<\tau_D\}\leq C_c\delta.
\]
The sum includes every integer exposure at most \(cN\). Adding
\(\mathbb P\{\tau_D\leq cN\}=o_N(1)\) from
\eqref{eq:direct-short-mass} proves \eqref{eq:direct-swallow}.}
\end{proof}

\begin{lemma}
\label{lem:direct-clock}
Fix
\[
        0<a_1<a_2<b<1/2
\]
and set
\[
        q:=\sqrt{1-2a_1}-\sqrt{1-2a_2}>0.
\]
For all sufficiently small fixed \(\delta>0\),
\[
 \mathbb P\left\{
   |C_{\tau\chi}(j)|\geq \lceil a_1N\rceil
 \right\}
 \geq q-C_b\sqrt{\delta}-C_b\delta-o_N(1).
 \tag{28}\label{eq:direct-clock-bound}
\]
\end{lemma}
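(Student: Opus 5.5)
The plan is to lower-bound the probability that the initial exploration of Algorithm~R, started at the red corner \((j,\chi(j))\), ends by a \emph{strong closure} at an exposure index in the window \((a_1N,a_2N]\), and that no loop closure of the initial corner happens first. By Lemma~\ref{lem:direct-orbit}, a strong closure at exposure \(m\ge\lceil a_1N\rceil\) gives \(|C_{\tau\chi}(j)|\ge m+1\ge\lceil a_1N\rceil\). The target event therefore contains
\[
        A:=\{\lceil a_1N\rceil\le\tau_{\mathrm{str}}\le a_2N\}\cap\{\tau_{\mathrm{str}}<\tau_{\mathrm{loop}}\},
\]
and it suffices to show \(\mathbb P(A)\ge q-C_b\sqrt\delta-C_b\delta-o_N(1)\).

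\textbf{Step 1 (exact strong-closure hazard).} Conditionally on \(\mathcal F_i\), while the initial exploration is still running, \(z_i\) is uniform among the \(D_i=N-2i-1\) eligible sides (Proposition~11 of \cite{budzinski_curien_petri_2019_universality}). A strong closure means \(z_i=r_i\). When \(P_i\ge2\), \(r_i\ne\ell_i\) is eligible, so the conditional hazard is exactly \(1/D_i\). When \(P_i=1\), strong closure is impossible and the hazard is \(0\). Hence the hazard equals \(h_i:=D_i^{-1}\1_{\{P_i\ge2\}}\), and \(h_i\le 1/D_i\) always.

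\textbf{Step 2 (clock comparison).} I will couple the exploration with a deterministic-hazard clock. Sample i.i.d.\ uniforms \(V_i\), and declare a clock ring at step \(i\) if \(V_i<1/D_i\). A strong closure at step \(i\) is then realised as a ring with \(P_i\ge2\) and no loop closure. The clock has no ring before \(a_1N\) with probability \(\prod_{i<a_1N}(1-1/(N-2i-1))\). Writing this product as \(\exp(-\sum_i 1/(N-2i)+O(1/N))\) and comparing the sum with an integral gives \(\sqrt{1-2a_1}+o_N(1)\). The first ring falls in \((a_1N,a_2N]\) with probability \(\sqrt{1-2a_1}-\sqrt{1-2a_2}+o_N(1)=q+o_N(1)\). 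Since \(h_i\le1/D_i\), the strong closure cannot occur strictly before the first ring on the coupled space. If the first ring is in the window and \(P\ge2\) at that step, and no loop closure occurred earlier, then \(A\) holds.

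\textbf{Step 3 (error terms).} Two things can spoil \(A\) on the clock event. First, the first ring may happen at a time with \(P_i=1\). Conditionally on the past, each step carries ring probability \(1/D_i\le C_b/N\) because \(i\le a_2N<bN\). So the probability of any ring at an active short time is at most \((C_b/N)\,\mathbb E\sum_{i<bN}\1_{\{P_i\le2\}}\). By \eqref{eq:direct-short-time}, this sum exceeds \(C_b\sqrt\delta N\) only with probability \(C_b\sqrt\delta+o_N(1)\), and on the complement the bound is \(C_b^2\sqrt\delta\). Together these give \(C_b\sqrt\delta+o_N(1)\) after renaming constants. Second, a loop closure may occur before \(a_2N\) and before the strong closure. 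Lemma~\ref{lem:direct-swallow} with \(c=a_2\) bounds this by \(C_b\delta+o_N(1)\). Subtracting both errors from \(q+o_N(1)\) yields \eqref{eq:direct-clock-bound}.

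The main obstacle is Step 2: making the coupling rigorous. The hazard is only known to be \(\le1/D_i\), with equality off the short-active set, and the exploration may have terminated. I would handle this by running the clock for all \(i<a_2N\) regardless, using the exact uniform-partner law to realise \(\{z_i=r_i\}\) as \(\{V_i<1/D_i\}\) whenever \(P_i\ge2\). The sampling of \(z_i\) on the complementary event is then done from the remaining \(D_i-1\) sides. This makes the discrepancy between ``first ring'' and ``first strong closure'' occur only at short-active times or after a loop closure, which are exactly the two error events controlled in Step 3.
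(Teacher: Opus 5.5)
Your proposal is correct and follows essentially the paper's route. Both realise the strong closure through an independent Bernoulli clock with deterministic hazards $1/D_i$, whose survival product gives $q+o_N(1)$ for the window, and both charge the discrepancies to rings at active short times via \eqref{eq:direct-short-time} and to early loop closures via Lemma~\ref{lem:direct-swallow}.

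The only difference is cosmetic. Where the paper uses a truncated union bound over the first $\lfloor C_b\sqrt\delta\,N\rfloor$ short times, you bound rings at short times by
\[
\frac{C_b}{N}\mathbb E A\le C_b^2\sqrt\delta+C_b\frac{\lceil bN\rceil}{N}\mathbb P\{A>C_b\sqrt\delta\,N\},
\qquad A=\sum_{i<bN}\1_{\{P_i\le2\}},
\]
which yields the same $O(\sqrt\delta)+o_N(1)$ error. This is how your ``on the complement'' step should be read, since $\{A\le C_b\sqrt\delta\,N\}$ is not known in advance.
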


\begin{proof}
\revision{Enlarge \(\mathcal F_i\) by the auxiliary Bernoulli variables, if
any, from exposures strictly before \(i\); this preserves the uniform
unexplored matching.}
At exposure \(i\), before the partner is revealed, put
\[
        h_i:=\frac1{D_i}.
\]
If \(P_i\geq2\), the right side of the active corner is a specified
eligible partner.  We may therefore reveal the uniform partner in two
stages: \revision{first sample a Bernoulli variable \(\xi_i\)} with parameter
\(h_i\); on \(\{\xi_i=1\}\), choose that specified side, and on
\(\{\xi_i=0\}\), choose uniformly among the remaining eligible sides.
If \(P_i=1\), adjoin an auxiliary Bernoulli variable with the same
parameter, independently of the actual partner choice.  In this
construction the sequence \((\xi_i)\) is independent, because its
conditional success probabilities \(h_i\) are deterministic.  A
success is an actual strong closure whenever \(P_i\geq3\).  The
possible discrepancies occur only at times for which \(P_i\leq2\).

Let
\[
        \Theta:=\inf\{i:\xi_i=1\}.
\]
For every fixed \(t<b\),
\[
 \mathbb P\{\Theta\geq tN\}
   =\prod_{0\leq i<tN}\left(1-\frac1{N-2i-1}\right)
   =\sqrt{1-2t}+o_N(1).
 \tag{29}\label{eq:direct-survival}
\]
The last equality follows by taking logarithms: the sum of the
squared hazards is \(O(N^{-1})\), while the first-order sum is a
Riemann sum for
\(\int_0^t(1-2s)^{-1}\,ds\).
Consequently
\[
 \mathbb P\{a_1N\leq\Theta<a_2N\}=q+o_N(1).
 \tag{30}\label{eq:direct-window}
\]

Let
\[
        A_b:=\sum_{0\leq i<bN}\1_{\{P_i\leq2\}},
        \qquad
        \eta:=C_b\sqrt{\delta},
\]
where \(C_b\) is increased if necessary so that
\eqref{eq:direct-short-time} gives
\[
        \mathbb P\{A_b>\eta N\}
        \leq C_b\sqrt{\delta}+o_N(1).
\]
\revision{To obtain \eqref{eq:direct-bad-coin}, consider only the first
\(\lfloor\eta N\rfloor\) indices \(i<bN\) for which \(P_i\leq2\).
Whether exposure \(i\) is counted is known before its auxiliary variable is
sampled. Before time \(bN\), each auxiliary Bernoulli variable has success
probability at most \(C_b/N\), so the conditional union bound over these
counted times is at most
\(C_b\eta\). On \(\{A_b\leq\eta N\}\), they include every active short time.
Adding the preceding exceptional probability gives}
\[
 \mathbb P\left\{
   \xi_i=1\text{ for some \(i<bN\) with \(P_i\leq2\)}
 \right\}
 \leq C_b\sqrt{\delta}+o_N(1).
 \tag{31}\label{eq:direct-bad-coin}
\]
\revision{This use of the truncated count is valid although the set
\(\{i:P_i\leq2\}\) is random, since its indicator is known before the
variable \(\xi_i\) is sampled.}

\revision{On the event in \eqref{eq:direct-window}, outside the event in
\eqref{eq:direct-bad-coin}, the first auxiliary success gives a strong
closure unless \(\tau_{\mathrm{loop}}<\Theta\).}
Lemma~\ref{lem:direct-swallow}, with \(c=b\), bounds the latter
possibility by \(C_b\delta+o_N(1)\).  Thus
Lemma~\ref{lem:direct-orbit}, \eqref{eq:direct-window}, and
\eqref{eq:direct-bad-coin} give
\[
 \mathbb P\left\{
   |C_{\tau\chi}(j)|\geq \lceil a_1N\rceil
 \right\}
 \geq q-C_b\sqrt{\delta}-C_b\delta-o_N(1),
\]
because a closure at the integer time
\(\Theta\geq a_1N\) exposes at least
\(\lceil a_1N\rceil\) distinct orbit points.  This is
\eqref{eq:direct-clock-bound}.
\end{proof}

\begin{proof}[Proof of Proposition~\ref{prop:small}]
Choose any fixed \(0<a_1<a_2<b<1/2\), and let
\[
        q=\sqrt{1-2a_1}-\sqrt{1-2a_2}.
\]
Next choose \(\delta>0\) so small that all the preceding estimates
apply and
\[
        C_b\sqrt{\delta}+C_b\delta\leq q/2.
\]
Set
\[
        \delta_0:=2\delta,\qquad a:=a_1,\qquad p:=q/2.
\]
If at most \(\delta_0K=\delta N\) sides of \(H\) lie in \(1\)- or
\(2\)-cycles of \(\chi\), Lemma~\ref{lem:direct-clock} gives, uniformly
for every prescribed \(j\) outside those cycles,
\[
 \mathbb P\left\{
   |C_{\tau\chi}(j)|\geq \lceil a_1N\rceil
 \right\}
 \geq p-o_K(1).
\]
Since \(N=2K\), the event on the left implies
\(|C_{\tau\chi}(j)|\geq aK\).  This proves the proposition.
\end{proof}

\section*{\texorpdfstring{\revision{Declaration on the use of generative AI}}{AI declaration}}
The connection of the stirring model to random surfaces and, in particular, the work of Budzinski, Curien, and Petri that underpins the present article was brought to our attention through OpenAI ChatGPT 5.2. We also employed Anthropic Claude Sonnet 5 to generate the TikZ code for the figures and for copy editing. The authors take full responsibility for the contents of the manuscript.

\printbibliography

\end{document}